\documentclass[10pt, oneside]{amsart}

\usepackage{geometry}
\usepackage{parskip}
\usepackage{amsfonts, amsthm, amsmath, amssymb, amsfonts}
\usepackage{mathrsfs}
\usepackage{graphicx}
\usepackage{xtab}
\usepackage{MnSymbol}
\usepackage{mathtools} 
\graphicspath{ {./images/} }
\usepackage{wrapfig}
\usepackage{subcaption}
\usepackage{url}

\usepackage{fancyhdr} 
\usepackage{color} 
\usepackage{float}
\usepackage[hidelinks]{hyperref}
\usepackage[capitalise]{cleveref}
\usepackage{siunitx}
\usepackage{booktabs}
\usepackage[font={small}]{caption}
\usepackage{esint}
\usepackage{tikz}
\usepackage{pgfplots}
\usepgfplotslibrary{colormaps}
\pgfplotsset{compat=1.18}
\usepackage{listings}
\usepackage{mathdots}
\usepackage{nicefrac}

\theoremstyle{plain} 
\newtheorem{thm}{Theorem}[section]
\newtheorem{bigthm}{Theorem}[section]

\newtheorem{lem}[thm]{Lemma}
 
\newtheorem{prop}[thm]{Proposition}

\newtheorem*{thm*}{Theorem} 
\newtheorem*{prop*}{Proposition} 
\newtheorem{notation}[thm]{Notation}

\theoremstyle{definition} 
\newtheorem{defn}[thm]{Definition}

\theoremstyle{remark} 
\newtheorem{rem}{Remark}

\usepackage{setspace}

\newcommand{\mbb}[1][]{\mathbb{#1}}
\newcommand{\bN}{\mbb[N]}

\newcommand{\bR}{\mbb[R]}

\newcommand{\bZ}{\mbb[Z]}

\newcommand{\inv}{^{-1}}
\newcommand{\nton}{\bN\rightarrow\bN}
\newcommand{\ol}[1][]{\overline{#1}}
\newcommand{\pres}[2][]{\ensuremath{\left\langle #1 \middle| #2 \right\rangle}}
\newcommand{\hpres}[2][]{\ensuremath{\left\langle #1 \middle\| #2 \right\rangle}}
\newcommand{\ra}{\rightarrow}

\newcommand{\brac}[1][]{\ensuremath{\left( #1\right)}}

\newcommand{\setst}[2][]{\ensuremath{\left\{{#1}\,\middle|\,{#2}\right\}}}
\newcommand{\del}{\partial}
\DeclareMathOperator{\Area}{\text{Area}}
\DeclareMathOperator{\HArea}{\text{HArea}}
\DeclareMathOperator{\FA}{\text{FA}}
\DeclareMathOperator{\Rad}{\text{Rad}}

\DeclareMathOperator{\In}{\text{in}}
\DeclareMathOperator{\out}{\text{out}}

\usepackage{tikz}
\usetikzlibrary{calc}
\usetikzlibrary{automata}
\usetikzlibrary{backgrounds}
\usetikzlibrary{fit}
\usetikzlibrary{arrows}
\usetikzlibrary{arrows.meta}
\usetikzlibrary{decorations.pathreplacing}
\usetikzlibrary{decorations.markings}
\usetikzlibrary{positioning}
\usetikzlibrary{intersections}
\usetikzlibrary{calligraphy}
\usetikzlibrary{patterns}
\usetikzlibrary{shapes}
\usetikzlibrary{shapes.geometric}
\usetikzlibrary{shadows}
\usetikzlibrary{fadings}
\usetikzlibrary{backgrounds}
\usepgfplotslibrary{colorbrewer}
\pgfplotsset{cycle list/Blues-9}
\usetikzlibrary{nfold}
\usepackage{tikz-3dplot}

\tikzset{
    loop above right/.style={above right, out= 60, in= 30, loop},
    loop above left/.style ={above left,  out=150, in=120, loop},
    loop below right/.style={below right, out=330, in=300, loop},
    loop below left/.style ={below left,  out=240, in=210, loop}
}

\tikzset{%
  show curve controls/.style={
        postaction={
            decoration={
                show path construction,
                curveto code={
                \draw [blue] 
                    (\tikzinputsegmentfirst) -- (\tikzinputsegmentsupporta)
                    (\tikzinputsegmentlast) -- (\tikzinputsegmentsupportb);
                \fill [red, opacity=0.5] 
                    (\tikzinputsegmentsupporta) circle [radius=.5ex]
                    (\tikzinputsegmentsupportb) circle [radius=.5ex];
                }
            },
            decorate
        }
    }
}

\title{Homological Isoperimetric Inequalities for Kernels of Free Extensions of Type \texorpdfstring{$FP_2$}{FP_2}}
\date{}
\author{Jakub F. Tucker}
\begin{document}
\begin{abstract}
    We define homological area-radius pairs with surface diagrams. Using these, we adapt a proof of Gersten and Short \cite{gersten2002} to obtain a homological isoperimetric inequality for subgroups of type $FP_2$ which appear as kernels of free extensions.
\end{abstract}

\maketitle
\section{Introduction}
Hyperbolic groups have long been an object of interest in geometric group theory. One characterisation is that a group is hyperbolic if and only if it is finitely presentable and satisfies a linear isoperimetric inequality. In general, there is much less known about subgroups of hyperbolic groups: hyperbolicity is not a property that extends to all subgroups. In \cite{rips1982}, predating Gromov's introduction of the notion of hyperbolic groups, Rips constructs finitely generated subgroups of small cancellation groups which are not finitely presentable. In \cite{brady1999}, Brady constructs the first example of a finitely presented subgroup of a hyperbolic group that is not hyperbolic.

Brady's subgroup appears as the kernel of a surjective map from a hyperbolic group onto $\bZ$. Results of Gromov in \cite{gromov1987} and Bowditch in \cite{bowditch1995} tell us that any group satisfying a subquadratic isoperimetric inequality must also satisfy a linear one, thus the Dehn function of this subgroup is at least quadratic. The question of an upper bound prompted the work of Gersten and Short in \cite{gersten2002}, which concluded:

\begin{thm}{\label{thm:gerhyp}}
     Given an extension
    \begin{center}
        $1\ra K\ra H\ra F_n\ra 1$
    \end{center}
    where $F_n$ is free and $K$ is finitely presented, then if $H$ is hyperbolic, $K$ satisfies a polynomial isoperimetric inequality.
\end{thm}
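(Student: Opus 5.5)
The plan is to follow the Gersten--Short strategy: use hyperbolicity of $H$ to get a linear isoperimetric inequality with a linear radius bound for van Kampen diagrams in $H$, and then push these diagrams down into $K$ using the splitting of the free extension.

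\textbf{Setup.} Since $F_n$ is free, the sequence splits, so $H \cong K \rtimes F_n$. Choose a finite presentation $\langle A \mid R\rangle$ of $K$ and free generators $t_1,\dots,t_n$ of $F_n$. Then $H$ has the finite presentation with generators $A\cup\{t_i\}$, relators $R$, and conjugation relators $t_i a t_i^{-1} = w_{i,a}$ and $t_i^{-1} a t_i = w'_{i,a}$, where the $w_{i,a}, w'_{i,a}$ are words in $A$. Because $H$ is hyperbolic, every null-homotopic word $w$ of length $\ell$ in this presentation admits a van Kampen diagram $D$ with area at most $C\ell$ and radius at most $C\ell$. The radius bound is the key extra input: for hyperbolic groups one can take diagrams that are simultaneously linear in area and in radius, for example via Dehn's algorithm or by coning off geodesic triangles.

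\textbf{Reduction to $K$.} Let $w$ be a word in $A$ representing the identity, with $|w|=\ell$. Take the diagram $D$ over the presentation of $H$. The $t_i$-edges of $D$ form $t$-corridors. Since $w$ contains no $t$-letters, every corridor is an annulus. I would first remove the annuli, or argue that innermost ones bound subdiagrams that can be excised with at most linear change in area. Then I would convert $D$ into a diagram over $\langle A\mid R\rangle$. Following Gersten--Short, each $2$-cell and each vertex of $D$ lies at distance at most $C\ell$ from the boundary. Along a geodesic path in the $1$-skeleton from the basepoint to a cell, the $F_n$-component has length at most $C\ell$. Mapping each relator of $H$ back into $K$ means conjugating the corresponding $K$-relator by an element of $F_n$ of length at most $C\ell$. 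Equivalently, we rewrite $w$ as a product of at most $C\ell$ conjugates $u_j r_j^{\pm1} u_j^{-1}$, where $r_j$ is a relator of $H$ and $|u_j|\le C\ell$.

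Now eliminate the $t$-letters. Let $\lambda$ be the maximum length of the words $w_{i,a}, w'_{i,a}$. The automorphisms of $K$ induced by the $t_i^{\pm1}$ change word length by a factor at most $\lambda$. A relator conjugated by $t$-words of length $m$ therefore becomes a null-homotopic word in $A$ of length at most $\lambda^m$ times a constant. That is only exponential, and this is the main obstacle. Gersten and Short get a polynomial bound because the conjugating elements need not be arbitrary: hyperbolicity of $H$ forces the $K$-part and the $F_n$-part to be controlled together. The $t$-shadows along geodesics to the boundary fellow-travel boundary subwords, which contain no $t$-letters.

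\textbf{Main step and obstacle.} I would exploit this as follows. Since $w$ has no $t$-letters, any point of $D$ lies close to a point of $w$, and $w$ lies in the coset $K$. I would then use the quasi-isometric-embedding-type estimates of the free quotient on geodesics in $H$ to show that the $F_n$-depth of each cell is $O(\log \ell)$ rather than $O(\ell)$. The tool here is the exponential divergence of geodesics in a hyperbolic space: a $t$-corridor annulus of radius $m$ would have boundary length exponential in $m$, while its length is bounded by the area, which is at most $C\ell$. With depth $O(\log \ell)$, each conjugated relator has length at most $\lambda^{O(\log\ell)} = \ell^{O(1)}$ in $A$. By induction on depth, each such relator has $K$-area polynomial in $\ell$. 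Summing over the $C\ell$ cells then gives a polynomial isoperimetric inequality for $K$.

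Establishing the logarithmic depth bound from the linear isoperimetric inequality is the step I expect to be hardest. My first attempt would be to bound nested $t$-annuli using the linear isoperimetric inequality, applied to the inner boundary words of the annuli.
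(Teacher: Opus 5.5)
There is a genuine gap, and it sits exactly at the step you flag as hardest. The whole argument rests on an $O(\log\ell)$ bound for the $F_n$-depth of the cells, and you do not establish it. The route you suggest would not give it.

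A linear isoperimetric inequality bounds the nesting depth of $t$-rings only linearly, since each ring contains at least one cell. The linear radius bound you call the key extra input follows automatically from linear area, so it adds nothing. Your exponential-divergence heuristic, that a $t$-annulus at depth $m$ must have length exponential in $m$, is unjustified. An innermost ring can bound a tiny subdiagram. The $K$-regions between consecutive nested rings can also shorten the boundary words. So nothing ties a ring's length to its depth in an arbitrary linear-area diagram, and your depth bound remains an unproved assertion.

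The missing idea is to choose a different diagram rather than analyse an arbitrary linear-area one: trade linear area for logarithmic radius. By Gersten--Short's Lemma 2.2 (\cref{lem:hypbd}), a hyperbolic group has, for each null-homotopic word of length $\ell$, a van Kampen diagram $D$ with area $O(\ell\log\ell)$ and $\Rad(D)=O(\log\ell)$. This comes from recursively cutting the boundary loop by geodesic chords into $\delta$-thin triangles, each filled with bounded radius. There are logarithmically many levels, and each level contributes $O(\ell)$ area.

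The depth bound is then immediate and needs no annulus geometry. The boundary $\partial D$ lies in $K$. An $A$-edge stays in its coset, and a $t_i$-edge changes the reduced $F_n$-word of the coset by one letter. Hence every vertex of $D$ lies in some $Kw$ with $|w|\le\Rad(D)$.

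You then push $t$-rings down from the innermost (maximal $|w|$) outward. Lemmas \ref{lem:areacap1} and \ref{lem:areacap2} trade a $K$-filling of one side of a ring for a $K$-filling of the other side. Each round costs a constant factor plus a term linear in the current area. After at most $\Rad(D)$ rounds you get $K$-area at most $M^{O(\log\ell)}\cdot\ell\log\ell$, which is polynomial in $\ell$. This is exactly how the paper obtains the statement: \cref{thm:gergen} applied to the area-radius pair of \cref{lem:hypbd}. Ring-pushing also replaces your product-of-conjugates bookkeeping, which does not cleanly dispose of the conjugation relators or of the $A$-letters inside the conjugators.
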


In \cite{gerstendim2}, Gersten proves that if a hyperbolic group is of cohomological dimension $2$, then all its subgroups of type $FP_2$ are hyperbolic and therefore finitely presented. However, this property is unique to dimension $2$, as shown by Kropholler and Vigolo in \cite{kropholler2021hyperbolicgroupsfinitelypresented}, where a group $K$ which is of type $FP_2$ but not finitely presented (and thus not hyperbolic) appears as the kernel of a homomorphism $\phi:H\ra \bZ$ from a hyperbolic group $H$ of cohomological dimension 3. Notice the similarity to the construction of Brady's subgroup.

There is a notion of isoperimetric inequality for groups $H$ of type $FP_2$, including those that are not finitely presented: the \emph{homological Dehn function} $\FA_H$. Many foundational results are given in \cite{brady2021} by Brady, Kropholler and Soroko. 

The homological Dehn function is defined very similarly to the homotopical Dehn function. As such, we may hope that techniques for bounding traditional Dehn functions can be applied in this new setting, and that the two functions share useful properties. Indeed, there are many parallels with homotopical Dehn functions. For example, Theorem 2.36 of \cite{brady2021} tells us that the homological Dehn function is a quasi-isometry invariant. 

The homological Dehn function differs from its homotopical counterpart in some regards. For one, it is proven in Proposition 2.26 of \cite{brady2021} that every homological Dehn function is $\simeq$-equivalent to a superadditive function, while this is still an open conjecture for homotopical Dehn functions. Significantly, the link between the homological Dehn function and the word problem is weaker than its homotopical counterpart, as discussed in Corollary 6.1 of \cite{brady2021}. There exist groups of type $FP_2$ with unsolvable word problems, whose homological Dehn functions are $\simeq$-equivalent to $n\mapsto n^4$. On the other hand, it is well-known that a finitely presented group $G$ has solvable word problem if and only if its Dehn function $\delta_G$ is bounded above by a recursive function.

One might naturally ask whether the subgroups constructed by Kropholler and Vigolo also satisfy a polynomial upper bound to their homological Dehn function. 

Indeed, such an upper bound is achieved as an immediate corollary of our more general result below, analogous to \cref{thm:gergen} \cite[Theorem B]{gersten2002}:

\textbf{\cref{thm:homger}.}\emph{
    Let $H$ be an extension of a group $K$ of type $FP_2$ by a finitely generated free group $F_n$, so that we have the short exact sequence
    \begin{center}
        $1\ra K\ra H\ra F_n\ra 1$.
    \end{center}
    Then $H$ is of type $FP_2$, and if $(f,g)$ is a homological area-radius pair for $H$ with $f(n)\geq n$ for all $n$, there is a constant $C >1$ such that $\FA_K\preccurlyeq C^gf$.
}

Such $H$ as in the statement are known to be of type $FP_2$ following \cite[Proposition 2.2]{kochloukova2016homologicalfinitenesspropertiesfibre}.

To obtain this result, we adapt the definition of an area-radius pair as used in \cite{gersten2002} by introducing the \emph{homological area-radius pair} as in \cref{def:homarp}. Fundamental properties of usual area-radius pairs also hold for their homological counterpart. Most importantly, shown in \cref{prop:harindep}, the independence of choice of (homological) finite presentation, up to the notion of equivalence of \cref{def:equivs}.

The proof of Gersten and Short's result consists of dividing a disc diagram for a word in $K\le H$ into components existing in $K$, separated by certain annuli called `$t$-rings', which are then `pushed down' by exchanging the disc diagram filling the interior boundary of such annuli, for one filling the exterior boundary. These $t$-rings are pushed down from innermost outward, each time controlling area increase linearly, to obtain a disc diagram entirely within $K$, called a $K$-filling.

The homological perspective introduces new challenges, stemming from working with surface diagrams in place of disc diagrams. Our `$t$-ring' annuli are not necessarily nullhomotopic, prompting us to instead look at `$t$-cycles' as in Subsection \ref{sec:tring}. We also consider the product-like coarse structure of the Cayley graph of the extension $H$ in order to implement `pushing down' one $t$-cycle at a time until we find a $K$-filling.

We conclude with the application of \cref{thm:homger} to subgroups of hyperbolic groups, such as those seen in \cite{kropholler2021hyperbolicgroupsfinitelypresented}, achieving a result analogous to \cref{thm:gerhyp}.

\textbf{\cref{thmB}}\emph{
    Let $H$ be an extension of a group $K$ of type $FP_2$ by a finitely generated free group $F_n$, so that we have the short exact sequence
    \begin{center}
        $1\ra K\ra H\ra F_n\ra 1$.
    \end{center}
    Then if $H$ is hyperbolic, $FA_K$ is bounded above by a polynomial.
}

One should note that in the case of $K$ being finitely presented, both theorems follow the work of Gersten and Short, since \cref{lem:comparedf} bounds the homological Dehn function above by the homotopical version.

\textbf{Outline.} In \cref{se:homdf}, we recall necessary results and definitions regarding homological Dehn functions from \cite{brady2021}. In \cref{sec:gerstenrecap}, we recall the key technical components of Gersten and Short's proof of their Theorem B. In \cref{sec:4}, we adapt the methods of Gersten and Short to prove an analogous result for homological Dehn functions of groups of type $FP_2$, namely \cref{thm:homger}, from which follows \cref{thmB}.

\textbf{Acknowledgements.} The author is grateful to Robert Kropholler, for posing this problem, and many helpful discussions, particularly in regards to \cref{subsec:surfacediagram}; and to Henry Bradford, for comments leading to much-improved presentation of the work. The author is supported by the Engineering and Physical Sciences Research Council [EP/W524633/1].

\section{The Homological Dehn Function}\label{se:homdf}
The homological Dehn function is defined for groups of type $FP_2$ as defined below. Notably, every finitely presented group is of type $FP_2$, but there exist groups of type $FP_2$ which are not finitely presentable, as shown by Bestvina and Brady in \cite{bestvina1997}.

Given a group $H$ with presentation $\pres[A]{R}$, we will not assume that $R$ is finite. We say $H$ is of type $FP_2$ if there exists a \emph{homological finite presentation} for $H$.

\begin{defn}[\cite{brady2021}]
    A homological finite presentation for $H=\pres[A]{R}$, where $A$ is finite, is a pair $\hpres[A]{R_0}$, where $R_0$ is a finite subset of $R$ such that $H_1\brac[X(A,R_0)]=0$. Here $X(A,R_0)$ denotes the homological Cayley complex, a locally finite CW complex constructed similarly to the Cayley $2$-complex. Its 1-skeleton is the Cayley graph $\Gamma(H,A)$. For each vertex $x\in H$ and each relation $r\in R_0$, one attaches a $2$-cell $D_r^x$ with boundary the loop labelled by $r$ beginning at $x$. 
\end{defn}
\begin{rem}
    One should note that more typically in the literature, groups of type $FP_n$ $(n\in\bN)$ are defined in terms of projective resolutions of $\bZ G$-modules. The equivalence of these characterisations for groups of type $FP_2$ is Proposition 2.7 of \cite{brady2021}.
\end{rem}
\begin{rem}
    If $H$ is finitely presented, $\hpres[A]{R}$ is a homological finite presentation for $H$.
\end{rem}
\begin{rem}
    The group $H$ acts freely, cellularly, cocompactly, and vertex transitively on the homological Cayley complex $X(A,R_0)$ on the left as follows. On vertices, an element $h\in H$ maps the vertex $x\in H$ to the vertex $hx$. On edges, an edge from $x$ to $xa$ with label $a$ is mapped by $h$ to the edge from $hx$ to $hxa$ with the same label. On 2-cells, the 2-cell $D_r^x$ is mapped to the 2-cell $D_r^{hx}$. Note that we require $R_0$ to be finite to ensure that this action is cocompact.
\end{rem}

In the homological Cayley complex, every 1-cycle is the boundary of a 2-chain. This property allows us to define the homological filling functions below, and in turn the \emph{homological Dehn function} $\FA_H$ for a group $H$ of type $FP_2$.

\begin{defn}[\cite{brady2021}]
    A map between CW complexes is called \emph{combinatorial} if each open cell of the domain is mapped homeomorphically onto an open cell of the target. A CW complex is called \emph{combinatorial}, or simply a \emph{combinatorial complex}, if all of its attaching maps are combinatorial.
\end{defn}

Importantly, the Cayley $2$-complex of a homological finite presentation $\hpres[A]{R_0}$ is combinatorial, and Proposition 2.6 of \cite{brady2021} asserts a partial converse.
\begin{prop}[\cite{brady2021}]
    Let $X$ be a connected $2$-dimensional combinatorial complex with trivial first homology on which $H$ acts freely, cellularly, cocompactly and vertex-transitively, then there exists a homological finite presentation $\hpres[A]{R_0}$ for $H$ whose homological Cayley complex is $X$.
\end{prop}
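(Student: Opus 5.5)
The plan is to reconstruct the presentation from the orbit structure of the action on $X$. Since the action is vertex-transitive and free, choosing a base vertex $v_0$ gives a bijection $H\to X^{(0)}$, $h\mapsto hv_0$, and we identify each vertex with the corresponding group element. Free action on vertices together with cocompactness means there are only finitely many $H$-orbits of edges. For each edge orbit, pick a representative and orient it. Whenever possible, choose the representative so that it starts at $v_0$; every edge orbit contains such an edge, because $H$ acts transitively on vertices. An edge starting at $v_0$ and ending at $hv_0$ is labelled by the element $h$. Let $A$ be the finite set of these labels, with one symbol for each oriented edge orbit. Orbits of edges of the form $e$ and $\bar e$ need a little care: if $h e = \bar e$ for some $h$, we orient only one of them, using the fact that the action is free on cells. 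By equivariance, every edge of $X$ is then labelled by an element of $A$. The edge from $x v_0$ labelled $a$ ends at $x a v_0$, so the $1$-skeleton is $H$-equivariantly isomorphic to the Cayley graph $\Gamma(H,A)$. Since $X$ is connected, $A$ generates $H$.

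Next we handle the $2$-cells. Cocompactness gives finitely many orbits of $2$-cells. Since $X$ is combinatorial, the attaching map of each $2$-cell runs along an edge loop. Choose a representative $2$-cell from each orbit, and translate it so that its boundary passes through $v_0$. Reading the labels around the boundary starting at $v_0$ gives a word $r$ in $A^{\pm1}$. The boundary is a closed loop in $\Gamma(H,A)$, so $r$ evaluates to the identity in $H$, and hence $r$ lies in the kernel $R$ of the map $F(A)\to H$. Let $R_0$ be the finite set of these words. Then $\pres[A]{R}$ is a presentation of $H$, with $R$ taken to be all relators, and $R_0\subseteq R$ is finite.

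We then compare $X$ with $X(A,R_0)$. The $2$-cells of $X$ in the orbit of the representative for $r$ are exactly the translates $h\cdot D$. Freeness of the action on $2$-cells means these translates correspond bijectively to $h\in H$, and each is attached along the loop labelled $r$ starting at $hv_0$. This matches the definition of $D_r^{h}$. We obtain an $H$-equivariant cellular isomorphism $X\cong X(A,R_0)$. Since $H_1(X)=0$, this isomorphism shows that $\hpres[A]{R_0}$ is a homological finite presentation.

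The main obstacle is bookkeeping rather than anything conceptual. Two points need care:
\begin{itemize}
\item Distinct $2$-cell orbits might give the same word, or a word might be traversed from different basepoints. This would produce the wrong number of cells, since $X(A,R_0)$ attaches exactly one cell per $(x,r)$. To fix this, we allow $R_0$ to be a finite multiset indexed by orbits, or we distinguish the cells formally. If a cell's boundary passes through $v_0$ several times, we fix one choice of basepoint per orbit; freeness guarantees that the resulting translates are distinct.
\item Loops and multi-edges need the same care. If $hv_0=v_0$ then $h=1$ by freeness, so loops occur only as edges labelled by the identity. Such edges are still permitted as generators.
\end{itemize}
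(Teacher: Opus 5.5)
Your argument is correct and is the standard proof of this fact, which the paper does not prove itself but quotes from \cite{brady2021}: identify $X^{(0)}$ with $H$ via a base vertex, let $A$ index the edge orbits (one orientation each) so that $X^{(1)}$ is $H$-equivariantly $\Gamma(H,A)$, read one relator off a representative of each $2$-cell orbit, and transfer $H_1(X)=0$ along the resulting isomorphism $X\cong X(A,R_0)$. Two small remarks: first, freeness already excludes any $h$ with $he=\bar e$, so that case is vacuous rather than something to handle; second, your multiset convention for $R_0$ is genuinely needed, because in any $X(A,R_0)$ with $R_0$ a \emph{set} an edge loop of length $L$ is the attaching loop of at most $2L$ cells $D_r^x$, so attaching nine $\mathbb{Z}^2$-orbits of $2$-cells along the squares of the standard square tiling of the plane gives an $X$ that satisfies every hypothesis yet is not realised by any set $R_0$.
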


\begin{defn}[\cite{brady2021}]
    Let $X$ be a 2-dimensional combinatorial complex with $H_1(X)=0$. Let $\gamma$ be a 1-cycle in $X^{(1)}$, so that $\gamma$ can be realised as a combinatorial map $\bigsqcup_iS^1\ra X$ (when $\gamma$ is a loop this map is $S^1\ra X$) and $|\gamma|$ is defined to be the minimal number $\sum_j|b_j|$ of $1$-cells in $\bigsqcup_iS^1$ among all representations $\sum_jb_je_j$ of $\gamma:\bigsqcup_iS^1\ra X$ where $b_j\in\bZ$ and $e_j$ are edges in $X$ for finitely many $j$.

    We define the \emph{area} $\Area(c)$ of a 2-chain $c=\sum_i a_i\sigma_i$ to be $\sum_i|a_i|$ and the \emph{homological area} of $\gamma$ to be
        $$\HArea_X(\gamma)\coloneqq \min\left\{\Area(c)\,\middle|\, \gamma=\del c \text{ for }c=\sum_ia_i\sigma_i\right\}.$$
    The \emph{homological filling function} is
        $$\FA_X(n)\coloneqq \sup\setst[\HArea_X(\gamma)]{\gamma\text{ is a $1$-cycle in $X^{(1)}$ with }|\gamma|\leq n}.$$

    A \textit{homological isoperimetric inequality for }$X$ is a function $f:\nton$ with the property that $\FA_X(n)\le f(n)$ for all $n$.
\end{defn}

\begin{defn}\label{def:equivs}
    For functions $f,\ol[f]:\bN\ra\bR$, we say $f\preccurlyeq \ol[f]$ if there exists a constant $C$ such that $f(n)\leq C\ol[f](Cn+C)+Cn+C$, and $f\simeq f$ if both $f\preccurlyeq \ol[f]$ and $\ol[f]\preccurlyeq f$. For functions $g,\ol[g]:\bN\ra\bR$, we say $g\cong \ol[g]$ if there exists a constant $C$ such that both $g(n)\leq C\ol[g](Cn)+C$ and $\ol[g](n)\leq Cg(Cn)+C$ for all $n\in\bN$.
\end{defn}
\begin{rem}
    Note that `$\cong$' is a strictly stronger relation than `$\simeq$'.
\end{rem}

\begin{defn}[\cite{brady2021}]
    Consider a group $H$ of type $FP_2$ and a homological finite presentation \hpres[A]{R_0}, the \emph{homological Dehn function} of the triple $\langle H,A,R_0\rangle$ is
        $$\FA_{\langle H,A,R_0\rangle}(n)\coloneqq \FA_{X(A,R_0)}(n).$$
    We call a homological isoperimetric inequality for the homological Cayley complex $X(A,R_0)$ a \emph{homological isoperimetric inequality for }$\hpres[A]{R_0}$.
\end{defn}

Like the homotopical Dehn function, Proposition 2.24 of \cite{brady2021} tells us that, up to $\simeq$-equivalence, the homological Dehn function is independent of choice of homological finite presentation. So we can write $\FA_H$ without specifying a homological finite presentation. We can also define a \emph{homological isoperimetric inequality for }$H$ to be a function $f:\nton$ with the property that $\FA_H\preccurlyeq f$.



When a group $H$ of type $FP_2$ is also finitely presented, we may compare the homological and homotopical Dehn functions. 
\begin{lem}[\cite{brady2021}, Prop 2.28]\label{lem:comparedf}
    If $H\cong\pres[A]{R}$ is a finitely presented group, then $\FA_H(n)\preccurlyeq\overline{\delta_H}(n)$.
\end{lem}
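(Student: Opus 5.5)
We need to show that for a finitely presented $H=\pres[A]{R}$, with $R$ finite, $\FA_H\preccurlyeq\overline{\delta_H}$. Here $\overline{\delta_H}$ is presumably the superadditive closure, or simply the Dehn function; we take it to be the Dehn function (or any $\simeq$-equivalent superadditive version). By the remark above, $\hpres[A]{R}$ is a homological finite presentation, and the homological Cayley complex $X=X(A,R)$ coincides with the Cayley $2$-complex. Since $\FA_H$ is independent of the homological presentation up to $\simeq$, it suffices to prove $\FA_X(n)\le \overline{\delta_H}(n)$ pointwise, with $\overline{\delta_H}$ superadditive and at least $\delta_H$.

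\textbf{Step 1: a single loop.} Let $\gamma$ be a closed edge loop in $X^{(1)}$ of length $\ell$. Its label $w$ is a null-homotopic word of length $\ell$. A van Kampen diagram gives an expression
$$w=\prod_{i=1}^{N}u_ir_i^{\pm1}u_i^{-1}\quad\text{in } F(A), \qquad N\le\delta_H(\ell).$$
Each conjugate $u_ir_i^{\pm1}u_i^{-1}$ based at the starting vertex $x$ of $\gamma$ is a loop traversing $u_i$, then the boundary of the $2$-cell $D^{xu_i}_{r_i}$, then $u_i^{-1}$. Its cellular $1$-chain is $\pm\partial D^{xu_i}_{r_i}$, because the $u_i$ and $u_i^{-1}$ edges cancel. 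Free reductions in $F(A)$ also do not change the $1$-chain. Hence $\gamma=\partial\bigl(\sum_i\pm D^{xu_i}_{r_i}\bigr)$ as chains, and so $\HArea_X(\gamma)\le N\le\delta_H(\ell)$.

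\textbf{Step 2: general cycles.} A $1$-cycle $\gamma$ with $|\gamma|\le n$ is realised by a combinatorial map from $\bigsqcup_{j=1}^kS^1$ with total length $\sum_j\ell_j\le n$. Each circle maps to a closed edge loop $\gamma_j$, so the chain of $\gamma$ equals $\sum_j\gamma_j$. Summing the fillings from Step 1 gives
$$\HArea_X(\gamma)\le\sum_j\delta_H(\ell_j).$$
This is where superadditivity is needed: $\sum_j\overline{\delta_H}(\ell_j)\le\overline{\delta_H}\bigl(\sum_j\ell_j\bigr)\le\overline{\delta_H}(n)$, using that the superadditive closure is monotone.

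\textbf{Main obstacle.} The only delicate points are justifying that a minimal representation of $\gamma$ decomposes into closed loops whose lengths sum to $|\gamma|$, and handling superadditivity. If $\overline{\delta_H}$ means just $\delta_H$, I would instead use the standard fact that the Dehn function is $\simeq$ to a superadditive function. Alternatively, I would connect the loops by paths to a basepoint, which costs only a linear term in the $\preccurlyeq$ relation. Taking the supremum over all $\gamma$ with $|\gamma|\le n$ then yields $\FA_H\preccurlyeq\overline{\delta_H}$.
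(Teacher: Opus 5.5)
Your Steps 1 and 2 are correct, and they are essentially the standard argument behind Proposition 2.28 of \cite{brady2021}, which the paper cites rather than proves. You realise the cycle as closed loops of total length $|\gamma|$; this is the usual Euler-circuit decomposition, and it is built into the paper's definition of $|\gamma|$. You fill each loop by the $2$-chain coming from a van Kampen diagram, and your product-of-conjugates computation is the same thing. Finally you absorb $\sum_j\delta_H(\ell_j)$ into the superadditive closure, which is superadditive and monotone since $\overline{\delta_H}(m+1)\ge\overline{\delta_H}(m)+\delta_H(1)$.

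You should, however, drop the hedging in your first and last paragraphs, because both fallbacks are false.

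\begin{itemize}
\item There is no ambiguity to hedge: the paper defines $\overline{\delta_H}$ explicitly as the superadditive closure, and your argument genuinely needs it. Bounding $\sum_j\delta_H(\ell_j)$ by $\delta_H$ uniformly over all partitions of $n$ amounts to $\overline{\delta_H}\simeq\delta_H$. That is the same as $\delta_H$ being $\simeq$-equivalent to a superadditive function, which is not a standard fact but an open problem; the paper's introduction says as much.
\item Joining the loops to a basepoint does not cost only a linear term. The components of a $1$-cycle of length $n$ can lie arbitrarily far apart in $X$, so the connecting paths have unbounded length.
\item If you instead first translate the loops to a common basepoint, the resulting bound $\delta_H(n)$ applies to the translated cycle $\sum_j h_j\gamma_j$, not to $\gamma$. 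For instance, $\gamma_1-h\gamma_1$ translates to the zero cycle.
\end{itemize}
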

Here $\ol[\delta_H]$ is the \emph{superadditive closure} of the Dehn function $\delta_H$:
    $$\ol[f](n)\coloneqq\max\setst[f(n_1)+\cdots+f(n_r)]{r\geq 1, n_i\in\bN, n_1+\cdots+n_r=n}.$$

\section{Finitely Presented Kernels of Free Extensions}\label{sec:gerstenrecap}
Given a group $H$ with finite presentation $\mathcal{P}=\pres[A]{R}$: 
$F(A)$ denotes the free group on $A$, $\ol[A]$ denotes the set $A\cup A\inv$ and $\ol[A]^*$ denotes the free monoid on $\ol[A]$.

We consider words $w\in H$ with $w=_H 1$ as elements of $F(A)$ or $\ol[A]^*$ and their Van Kampen diagrams over $\mathcal{P}$ ($\mathcal{P}$-diagrams) as connected, oriented, labelled, planar graphs $D$, with an associated combinatorial map $\pi:D\ra \Gamma(H,A)$ which preserves orientations and labels of edges. Embedding a $\mathcal{P}$-diagram $D$ in the plane, the bounded regions of the complement of $D$ have boundaries labelled by elements of $R$, and the unbounded region has boundary labelled by $w$, the number of bounded regions is the area $\Area(D)$ of $D$. For $w$ as above, the \emph{area} of $w$ is 
$$\Area_{\mathcal{P}}(w)=\min\left\{\Area(D)\middle| (D,\pi)\text{ is a }\mathcal{P}\text{-diagram whose unbounded region has label }w\right\}.$$
An \emph{area function} $f:\bN\ra\bR$ for $\mathcal{P}$ is a function such that for every $n\in\bN$, and every word $w=_H1$ in $F(A)$ with length at most $n$, $\Area_{\mathcal{P}}(w)\leq f(n)$. Such a function is also called an \emph{isoperimetric inequality} and the Dehn function $\delta_\mathcal{P}$ is a minimal such function. A \emph{radius function} $g:\bN\ra\bR$ for $\mathcal{P}$ is a function such that for each word $w=_H1$ in $F(A)$, there is a Van Kampen diagram $D_\mathcal{P}(w)$ such that for every vertex $v$ in $D_\mathcal{P}(w)$, there is a path in the $1$-skeleton from $v$ to the boundary $\del D_\mathcal{P}(w)$ of length at most $g(n)$. A pair of such functions is called an \emph{area-radius} (AR) pair.

We say that AR pairs $(f,g)$ and ($\ol[f],\ol[g])$ are \textit{equivalent} when $f\simeq \ol[f]$ and $g\cong \ol[g]$. Proposition 2.1 of \cite{gersten2002} tells us that the equivalence class of an AR pair is a group invariant, \textit{i.e.} if $\mathcal{P}$ and $\mathcal{Q}$ are finite presentations for $H$, then there are equivalent AR pairs $(f,g)$ for $\mathcal{P}$ and $(\ol[f],\ol[g])$ for $\mathcal{Q}$.

Gersten and Short's general result, Theorem B of \cite{gersten2002}, which implies the specific case \cref{thm:gerhyp}, is as follows.
\begin{thm}{\label{thm:gergen}}
    Given an extension
    \begin{center}
        $1\ra K\ra H\ra F_n\ra 1$
    \end{center}
    where $F_n$ is free and $K$ is finitely presented, if $(f,g)$ is an area-radius pair for $H$ with $f(n)\geq n$ for all $n\in\bN$, then there exists a constant $C>1$ such that $C^gf$ is an isoperimetric inequality for $K$.
\end{thm}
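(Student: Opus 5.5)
The plan follows the $t$-ring strategy of Gersten and Short. Fix a finite presentation of $K$, $\pres[B]{S}$ with $B$ finite, and let $t_1,\dots,t_n$ be elements of $H$ mapping to a free basis of $F_n$. Then $H=K\rtimes F_n$, and it has the finite presentation
\[
\mathcal{P}=\pres[B,t_1,\dots,t_n]{S,\ t_i^{\epsilon} b t_i^{-\epsilon}=\phi_{i,\epsilon}(b)}.
\]
Here $\phi_{i,\epsilon}(b)$ is a fixed word in $B$ representing the conjugate $t_i^{\epsilon}bt_i^{-\epsilon}$, for each $b\in B$ and $\epsilon=\pm1$. By Proposition 2.1 of \cite{gersten2002}, we may assume $(f,g)$ is an AR pair for $\mathcal{P}$, at the cost of replacing it by an equivalent pair. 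The constant $C$ will only need adjusting under this equivalence. The conclusion $C^gf$ is stable in the sense required: if $g\cong\ol[g]$, then $C^{g(n)}\le C^{C'\ol[g](C'n)+C'}$, which is absorbed into a new base constant and the $\preccurlyeq$ rescaling.

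Given a word $w$ in $B$ with $w=_K1$ and $|w|\le n$, choose a $\mathcal{P}$-diagram $D$ realising both $\Area(D)\le f(n)$ and radius at most $g(n)$. Since $w$ contains no $t$-letters, the $t_i$-edges of $D$ pair up across conjugation relators into $t$-corridors. Corridors cannot end on $\partial D$, so each one closes into a $t$-ring, that is, an annulus. The rings are nested, and there are at most $f(n)$ of them.

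We remove rings from the innermost outward. Let a ring $A$ have inner boundary word $u$ and outer boundary word $v$, both words in $B$ with $v=_K t^{\pm1}ut^{\mp1}$, where $t$ is the relevant $t_i^{\pm1}$. The region inside $A$ has, by induction, already been replaced by an $S$-diagram $E$ filling $u$. We replace $A$ together with $E$ by an $S$-diagram filling $v$, obtained by applying the automorphism $\phi=\phi_{i,\pm}$ to $E$ cell by cell. Each relator $s\in S$ maps to $\phi(s)$, which is a word equal to $1$ in $K$ and so has a fixed filling of area at most a constant $M$. The new diagram therefore has area at most $M\cdot\Area(E)+|v|$.

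The key point is depth. A face of $D$ at distance at most $g(n)$ from $\partial D$ lies inside at most $g(n)$ nested rings. Along any chain of nesting, the multiplicative factor is therefore at most $M^{g(n)}$. Summing over the at most $f(n)$ original faces, each weighted by its factor, and adding the ring lengths, which total at most $2f(n)$ and are each multiplied by at most $M^{g(n)}$, gives a final $S$-diagram of area at most $C^{g(n)}f(n)$. This uses $f(n)\ge n$ to absorb the boundary term. Applying $\phi$ enlarges the rings' areas but does not change the nesting count, so the bound survives the induction.

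The main obstacle is making the depth bound rigorous. The radius condition controls distance to $\partial D$, and we must turn this into a bound on the number of rings enclosing a face. Any path from a face to $\partial D$ must cross each enclosing annulus, and crossing an annulus uses at least one edge. So the nesting depth is at most the radius plus one, which should suffice. A secondary concern is that later rings contain the already pushed-down inner diagrams. To keep the bookkeeping clean, I would track, for each original face of $D$, the factor $M^{\text{depth}}$ it eventually receives, rather than bounding the diagram areas step by step.
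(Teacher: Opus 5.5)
Your proposal is correct and follows the Gersten--Short $t$-ring argument that the paper summarises in Section~\ref{sec:gerstenrecap}: rings are pushed down from innermost outward using Lemmas~\ref{lem:areacap1} and~\ref{lem:areacap2}, each step multiplies area by a constant and adds a term linear in the ring length, and the nesting depth is bounded by the radius because any edge path leaving a ring must use one of its $t$-edges (its two sides lie in different cosets of $K$). One point to tidy: applying a single lift $\Phi$ to the inner filling gives a filling of $\Phi(u)$, which agrees with $v$ letter for letter only when every cell of the ring has its single letter on the inner side; in general you need the apply-$\Psi$-then-correct step of Lemma~\ref{lem:areacap2}, so the additive term should be a constant multiple of the ring length rather than $|v|$, which leaves your per-face bookkeeping unchanged.
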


To prove Gersten and Short's result, it is necessary to understand how the area of a Van Kampen diagram can be affected by the application of an automorphism, which leads to Lemmas 3.1 and 3.2 of \cite{gersten2002} (our Lemmas \ref{lem:areacap1}, \ref{lem:areacap2}):
\begin{notation}\label{not:bigphi}
    Let $\mathcal{P}=\pres[A]{R}$ be a presentation for a group $H$, where $A$ is finite, and let $\phi:H\ra H$ be a group automorphism. For each $a\in A$, choosing a word representing $\phi(a)$ in $F(A)$ induces a monoid homomorphism $\Phi:\ol[A]^*\ra\ol[A]^*$ with $\Phi(a)=_H\phi(a), \Phi(a\inv)=_H\phi(a\inv)=\phi(a)\inv$. Since $\phi$ is an automorphism, its inverse also induces a monoid homomorphism $\Psi:\ol[A]^*\ra\ol[A]^*$ where $\Phi(\Psi(a^{\pm 1}))=_Ha^{\pm 1}=_H\Psi(\Phi(a^{\pm1}))$.
\end{notation}
Note that Gersten and Short assume that the presentation $\mathcal{P}$ is finite.
\begin{lem}{\label{lem:areacap1}}
    There is a constant $S>0$ such that if $D_\mathcal{P}(w)$ is a Van Kampen diagram for a word $w=_H1$, then there is a Van Kampen diagram $D'_\mathcal{P}(\Phi(w))$ over $\mathcal{P}$ for $\Phi(w)$ with 
    \begin{center}
        \emph{$\Area\left(D'_\mathcal{P}(w)\right)\leq S\cdot \Area\left(D_\mathcal{P}(w)\right)$},
    \end{center}
\end{lem}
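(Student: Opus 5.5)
The plan is to build the new diagram by replacing each relator cell of $D_\mathcal{P}(w)$ with a fixed filling of its image under $\Phi$; this is a cell-by-cell substitution, with the constant $S$ coming from the finiteness of $\mathcal{P}$. For each relator $r\in R$, the word $\Phi(r)$ satisfies $\Phi(r)=_H\phi(r)=_H\phi(1)=1$, since $\Phi$ agrees with $\phi$ on generators and is a monoid homomorphism. We therefore fix, once and for all, a Van Kampen diagram $E_r$ over $\mathcal{P}$ with boundary label $\Phi(r)$. Because $R$ is finite, we may set
$$S\coloneqq \max\{1,\max_{r\in R}\Area(E_r)\},$$
which is a finite positive constant.

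Given a Van Kampen diagram $D=D_\mathcal{P}(w)$, we form $D'$ in two steps. First, subdivide every edge of $D$ labelled $a^{\pm1}$ into a path labelled by the word $\Phi(a^{\pm1})$. If this word is empty, collapse the edge to a point; this collapsing can create a degenerate planar structure, which we handle below. After subdivision, the boundary of each face labelled $r$ now reads $\Phi(r)$, and the outer boundary reads $\Phi(w)$. Second, glue a copy of $E_r$ into each face labelled $r$, matching boundary labels; this is possible because both boundaries read $\Phi(r)$ from corresponding basepoints. The resulting planar labelled complex has outer boundary label $\Phi(w)$ and exactly $\sum_{\text{faces}}\Area(E_r)\le S\cdot\Area(D)$ bounded regions. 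The map $\pi$ to $\Gamma(H,A)$ is obtained by composing with $\phi$ on vertices, which gives a consistent label-preserving combinatorial map.

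The main obstacle is to check that the result is a genuine Van Kampen diagram: a connected planar graph, rather than one that has been pinched or has overlapping pieces. Two things can go wrong. Edges with empty $\Phi$-image collapse, and an $E_r$ may have a non-disc (tree-like, singular) shape that does not embed as a filled disc along the face boundary. To avoid the first problem, I would first arrange that $\Phi(a)$ is nonempty for every $a$: choose the representative words to be nonempty, using, say, $aa\inv$ when $\phi(a)=1$, although for an automorphism $\phi(a)\ne1$ anyway. For the second problem, I would appeal to the standard fact that Van Kampen diagrams can be treated as singular disc diagrams. Equivalently, via van Kampen's lemma, $\Phi(w)$ is freely equal to a product of $\Area(D)$ conjugates of words $\Phi(r)^{\pm1}$, each of which is in turn freely equal to a product of at most $\Area(E_r)$ conjugates of relators. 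Applying $\Phi$ to the free-group identity $w=\prod_i u_i r_i^{\pm1}u_i\inv$ is legitimate because $\Phi$ induces an endomorphism of $F(A)$. This expresses $\Phi(w)$ as a product of at most $S\cdot\Area(D)$ conjugates of relators, and van Kampen's lemma then produces the required diagram $D'_\mathcal{P}(\Phi(w))$ with that bound on its area.
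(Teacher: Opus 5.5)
Your proof is correct and is essentially the argument the paper uses for its homological analogue, \cref{lem:hareacap1}, which follows Gersten and Short: subdivide each edge labelled $a$ into a path labelled $\Phi(a)$, then fill each face (now bounded by $\Phi(r)$) with a fixed filling of $\Phi(r)$, so that $S$ is the maximum area of these finitely many fixed fillings. Your detour through van Kampen's lemma uses that $\Phi$ induces an endomorphism of $F(A)$, which holds since $\Phi(a\inv)$ is the formal inverse of $\Phi(a)$. It is a clean way to dispose of the planarity and collapsed-edge worries, which the paper's chain-level version never has to face.
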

and conversely,
\begin{lem}{\label{lem:areacap2}}
    There are constants $S',S''>0$ such that if $D'_\mathcal{P}(\Phi(w))$ is a Van Kampen diagram for $\Phi(w)$, then there is a Van Kampen diagram $D''_\mathcal{P}(w)$ for $w$ such that
    \begin{center}
        \emph{$\Area\left(D''_\mathcal{P}(\Phi(w))\right)\leq S'\cdot \Area\left(D'_\mathcal{P}(\Phi(w))\right)+S''\cdot \ell(w)$.}
    \end{center}
\end{lem}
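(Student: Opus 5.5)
The plan is to build $D''_\mathcal{P}(w)$ from a given diagram $D'=D'_\mathcal{P}(\Phi(w))$ in two stages: first apply $\Psi$ to $D'$ cell by cell to obtain a diagram for $\Psi(\Phi(w))$, and then attach a collar of thin diagrams converting the boundary word $\Psi(\Phi(w))$ back into $w$. The first stage uses only the finiteness of $R$ and costs a multiplicative constant. The second stage costs an amount linear in $\ell(w)$, which is where the additive term $S''\cdot\ell(w)$ comes from.

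\textbf{Stage one.} Each $r\in R$ satisfies $r=_H1$, and $\psi=\phi\inv$ is a homomorphism, so $\Psi(r)=_H1$. Since $R$ is finite, I will fix once and for all a Van Kampen diagram $E_r$ for each $\Psi(r)$ and set $S'=\max_{r\in R}\Area(E_r)$. Inside $D'$, I replace each edge labelled $a^{\pm1}$ by a path labelled $\Psi(a^{\pm1})$. Then I replace each 2-cell with boundary label $r$ by a copy of $E_r$. This is precisely the construction behind \cref{lem:areacap1}, applied to $\psi$ instead of $\phi$, and it produces a diagram $\widetilde{D}$ for $\Psi(\Phi(w))$ with $\Area(\widetilde{D})\le S'\cdot\Area(D')$. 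One technical point: the edge substitution may create degenerate pieces, for instance when $\Psi(a)$ is freely trivial or edges get folded. These are handled exactly as in \cref{lem:areacap1}, by allowing singular (non-embedded) diagrams, which does not change area counts.

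\textbf{Stage two.} For each $a\in\ol[A]$ we have $\Psi(\Phi(a))=_Ha$, so the word $\Psi(\Phi(a))a\inv$ is null-homotopic. Since $\ol[A]$ is finite, I will fix a Van Kampen diagram $T_a$ for each of these words and set $S''=\max_a\Area(T_a)$. Write $w=a_1\cdots a_m$. Then $\Psi(\Phi(w))$ is the concatenation of the subwords $\Psi(\Phi(a_i))$. Along the boundary of $\widetilde{D}$, I glue each $T_{a_i}$ along its $\Psi(\Phi(a_i))$ side. The outer boundary of the resulting diagram then reads $w$, and its area is at most $S'\Area(D')+S''m$, which gives the claimed inequality with $\ell(w)=m$.

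\textbf{Main obstacle.} The delicate step is the bookkeeping of the combinatorial gluings, so that the result is a legitimate Van Kampen diagram (a planar, possibly singular, diagram) and not merely a 2-chain. For the homotopical statement this is the standard annular-collar argument: the strips $T_{a_i}$ glue consecutively, because the endpoint of one matches the start of the next in the Cayley graph, and so they form an annulus between $\Psi(\Phi(w))$ and $w$. Freely cancelling subwords such as $aa\inv$ in $w$ cause no problem, because singular diagrams are permitted.
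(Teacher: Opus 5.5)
Your proposal is correct and is essentially the argument the paper gives for its homological version, \cref{lem:hareacap2}, following Gersten--Short: push $D'$ through $\Psi$ cell by cell at multiplicative cost $S'$, then attach one fixed filling of $a^{-1}\Psi(\Phi(a))$ per letter of $w$, which produces the additive $S''\ell(w)$ term. Defining $S'$ via fixed fillings of the words $\Psi(r)$ is the right choice: the paper's text writes $\Psi\circ\Phi(r)$ at that point, but the cells of a diagram for $\Phi(w)$ carry the relators $r$ themselves, so after substitution the cells read $\Psi(r)$.
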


The key observation for Gersten and Short's theorem is as follows. Given a word $w=_K1$, we consider a Van Kampen diagram $D$ over $\mathcal{P}_H$. Since there are no $t$-edges on the boundary, \cite{gersten1996} tells us each edge labelled $t$ is in a unique annulus formed of $2$-cells corresponding to relations $t_i\inv x_jt_i\Phi(x_j)$ which lies in the interior of $D$, called a `$t$-ring'. Gersten and Short use Lemmas \ref{lem:areacap1}, \ref{lem:areacap2} to exchange a disc diagram filling for the interior boundary of a $t$-ring with one for the exterior boundary, while controlling linearly the increase in area.

\section{Kernels of type \texorpdfstring{$FP_2$}{FP2}}\label{sec:4}
In this section, we work under the hypothesis that we have an extension
\begin{center}
    $1\ra K\ra H\ra F_n \ra 1$,
\end{center}
where the groups $K$ and $H$ are of type $FP_2$, and $F_n$ is free of rank $n$. We give homological analogues of key results from \cite{gersten2002} used in the proof of \cref{thm:gergen}, beginning with expanding the notion of an area-radius pair on a disc diagram, adapting the results of \cref{lem:areacap1} and \cref{lem:areacap2}, before fitting the results together as Gersten and Short did.

\subsection{Surface Diagrams}\label{subsec:surfacediagram}
In \cite{gersten2002}, Gersten and Short do not work directly in the Cayley $2$-complex for $H$, but rather on \emph{disc diagrams}, $2$-discs with an associated combinatorial structure that map into the Cayley $2$-complex as a filling. We use an analogous approach and consider a \emph{surface diagram}, a space with simpler geometry than an arbitrary $2$-chain realised in the homological Cayley complex, while still obtaining the same information about the group.

To construct explicitly these surface diagrams, we will take cues from Arenas and Wise's definition in \cite{arenas2022linearisoperimetricfunctionssurfaces}, as well as the definitions of (combinatorial) disc and annular diagrams and their filling maps in \cite{brady2021}.

\begin{defn}[Surface Diagram]
    A \emph{surface diagram} $S$ is a compact combinatorial $2$-complex defined as 
    \begin{center}
        $\ol[S]\setminus \left(\bigsqcup_{i=1}^n e_i\right)$,
    \end{center}
    for some closed, possibly disconnected, surface $\ol[S]$, where the $e_i$ are finitely many disjoint open $2$-cells in a combinatorial cell structure on $\ol[S]$. The surface diagram has $n$ \emph{boundary paths}, corresponding to the attaching maps $\phi_i:\del D^2_{e_i}\hookrightarrow \ol[S]$. The \emph{boundary} $\del S $ of $S$ is the union of such paths.

    The \emph{area} $\Area (S)$ of a surface diagram $S$ is the number of $2$-cells in $S$. The \emph{radius} $\Rad (S)$ of a surface diagram $S$ is $\max_{v\in S^{(0)}}\left(\min\left\{|\gamma|\,\middle|\,\gamma\text{ is a path from $v$ to $\del S$}\right\}\right)$.
\end{defn}

\begin{defn}[Surface Diagram filling of a $1$-cycle]
    Let $X$ be a combinatorial complex with a $1$-cycle $\gamma$ in the $1$-skeleton $X^{(1)}$. A \emph{filling} for $\gamma$ is a pair $(S,\pi)$ consisting of a surface diagram $S$ with a combinatorial map $\pi:S\ra X$ where $\gamma=\pi\circ\bigsqcup_{i=1}^n \phi_i$, and each $\pi\circ\phi_i\subseteq \gamma$ is a closed loop in $X^{(1)}$. \emph{i.e.} each boundary loop maps to a loop in the $1$-cycle $\gamma$.
\end{defn}

The following result is known, but was not found in the literature. As such, we have included a full proof below.

\begin{lem}\label{lem:sdexists}
    Let $X=X(A,R_0)$ be a a homological Cayley complex for a group $H$ of type $FP_2$ with homological finite presentation $\hpres[A]{R_0}$, let $\gamma$ be a $1$-cycle in the $1$-skeleton $X^{(1)}$. Then there exists a surface diagram filling $(S,\pi)$ for $\gamma$.
\end{lem}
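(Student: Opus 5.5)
Since $H_1(X)=0$, the cycle $\gamma$ is a boundary: there is a $2$-chain $c=\sum_i a_i\sigma_i$ with $\del c=\gamma$. The plan is to build $S$ directly from $c$ and $\gamma$ by gluing cells along edges in pairs. First take $|a_i|$ copies of each $2$-cell $\sigma_i$, each a polygon $P$ whose boundary is subdivided and labelled by the attaching map of $\sigma_i$ (a combinatorial loop, since $X$ is combinatorial), oriented according to the sign of $a_i$. Next realise $\gamma$ as a combinatorial map $\bigsqcup_j S^1\to X$ with total length $|\gamma|$. For each circle, attach a further polygon $E_j$ whose boundary maps to that circle, with the opposite orientation. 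These $E_j$ will be the removed cells $e_j$. Let $\mathcal{C}$ be the disjoint union of all these polygons, with its natural map to $X$ on boundaries.

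The key step is a pairing of boundary edges. Let $e$ be a $1$-cell of $X$. The coefficient of $e$ in $\del c-\gamma=0$ is the signed count of polygon-boundary edges of $\mathcal{C}$ mapping to $e$. Its vanishing means the occurrences mapping to $e$ with positive orientation are equinumerous with those mapping with negative orientation. Choose any bijection between them, and glue each pair of edges by the homeomorphism compatible with the maps to $e$; this identification is orientation-reversing relative to the polygons' induced boundary orientations. Every edge of every polygon is then paired exactly once, so the quotient $\ol[S]$ is a closed oriented surface, possibly disconnected. Indeed, a standard polygon-gluing argument shows each point in an edge interior has a disc neighbourhood, and links of vertices are finite disjoint unions of circles.

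The main obstacle is vertices. A vertex link could be several circles, in which case the quotient is not a surface there (pinched points). I will fix this by splitting: at each vertex of the quotient, replace the vertex by one copy for each component of its link. This is the usual resolution of singular points and does not change the cell incidences along edges. The resulting complex is a closed surface $\ol[S]$ with a combinatorial cell structure, since each cell is a polygon with edges glued to edges and vertices to vertices. It comes with a map to $X$ that is a homeomorphism on open cells, so it is combinatorial. One should also check that polygons of length $1$ or $2$ are handled correctly, but these are covered by the same gluing.

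Finally, set $S=\ol[S]\setminus\bigsqcup_j \mathrm{int}(E_j)$ and let $\pi$ be the induced map. The $E_j$ are distinct open $2$-cells, hence disjoint. Their attaching maps $\phi_j$ compose with $\pi$ to give exactly the loops of $\gamma$, so $\gamma=\pi\circ\bigsqcup\phi_j$ and $(S,\pi)$ is a filling. In addition, $\Area(S)=\sum|a_i|=\Area(c)$, which will be useful later when the filling is taken from a minimal chain.
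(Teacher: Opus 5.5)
Your proposal is correct, but it reaches the lemma by a different mechanism from the paper's.

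Both arguments start from $|a_i|$ copies of each $2$-cell of $c$. The paper then glues greedily and locally, vertex by vertex, attaching copies along ``gluable'' shared edges until every point has a disc or half-disc neighbourhood. It thus builds a surface with boundary directly, and the unglued edges are those of $\gamma$.

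You instead cap off $\gamma$ with the polygons $E_j$. This turns $\del c-\gamma=0$ into one global perfect matching of oppositely oriented edge occurrences. You then use the classical fact that any pairwise edge-gluing of finitely many polygons is a closed surface. Deleting the caps gives $S$ in exactly the form $\ol[S]\setminus\bigsqcup e_i$ required by the definition, and orientability and $\Area(S)=\Area(c)$ come for free.

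Your signed matching also makes explicit something the paper's ``appears an even number of times'' remark passes over. Occurrences must be matched with ones of opposite orientation, or the leftover unglued edges need not read $\gamma$ with the correct orientations. Choosing a minimal realisation of $\gamma$ further ensures that no two cap edges are matched together, so each boundary edge of $S$ lies on exactly one $2$-cell.

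Your vertex-splitting step is vacuous. In an edge-pairing quotient, corners are identified only through chains of endpoint identifications coming from the edge gluings. So every vertex link is connected, hence a single circle, and no pinching occurs in $\ol[S]$.

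Pinching can genuinely occur only in $S$ after deleting the caps, at a vertex where cap boundaries meet (two caps, or one cap twice). The definition as stated tolerates this. If you want $S$ to be an honest surface with boundary, as the paper's proof asserts, split each vertex of $S$ into one vertex per component of its link in $S$, then re-cap the resulting boundary circles.
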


\begin{proof}
    The complex $X$ is a homological Cayley complex so $H_1(X)=0$ and there exists a (finite) $2$-chain $c=\sum_{i=1}^ka_i\sigma_i$ in $X$ with boundary $\del c=\gamma$. Each $2$-cell $\sigma_i$ has boundary a loop in $X^{(1)}$ labelled by a relator $r_i\in R_0$ beginning at a vertex $h_i\in H$.

    We begin with a collection of all the $2$-cells in $c$, with multiplicity given by the $a_i$ in the sum. We construct a combinatorial structure of a several-times punctured surface $S$ which maps combinatorially to $c$ as follows. Pick some $2$-cell $\sigma_i$ from this collection to add to our surface diagram $S$, consider its boundary word $r_i$ starting at $g_i$ in $c$. We say an edge in the loop $r_i$ starting at $g_i$ is \emph{shared with} $\sigma_j$ if it is contained in the boundary of another $2$-cell $\sigma_j$, we call the edge \emph{gluable} if it is either shared with some $\sigma_j$ in the collection, or shared with a $2$-cell $\sigma_k$ already in $S$, with the corresponding edge lying on the boundary $\del S$. If an unglued edge does not appear in the boundary of $c$, then it must appear in $c$ an even number of times so that its value is zero when the boundary is taken, so it must be gluable.
    
    If $\sigma_i$ has no gluable edges, then its boundary must form a loop in $\gamma$ so this $2$-cell forms its own connected component of $S$ and we move on to a new $2$-cell in $c$. If $\sigma_i$ has a gluable edge with label $a\in A$ and initial vertex $h$, pick some other $2$-cell $\sigma_j$ sharing this edge in $c$, and glue along it. 
    
    Return to the vertex $h$ in $S$, there are now 2 unglued edges with $h$ as their initial vertex, if neither is gluable, then we move on to another vertex in the boundary of $S$. If one of the edges containing $h$ in $S$ is gluable, then pick another $2$-cell $\sigma_k$ in $c$ that shares this edge, and glue along it. Note that we might choose a $\sigma_k$ that is already in $S$, thus identifying in $S$ two (possibly previously separate) vertices corresponding to the same group element. There might be other $2$-cells that share this edge in $c$, but in the surface diagram $S$ we do not necessarily identify their endpoints labelled $h$ in $c$. Compare with \cref{fig:surface2}.

    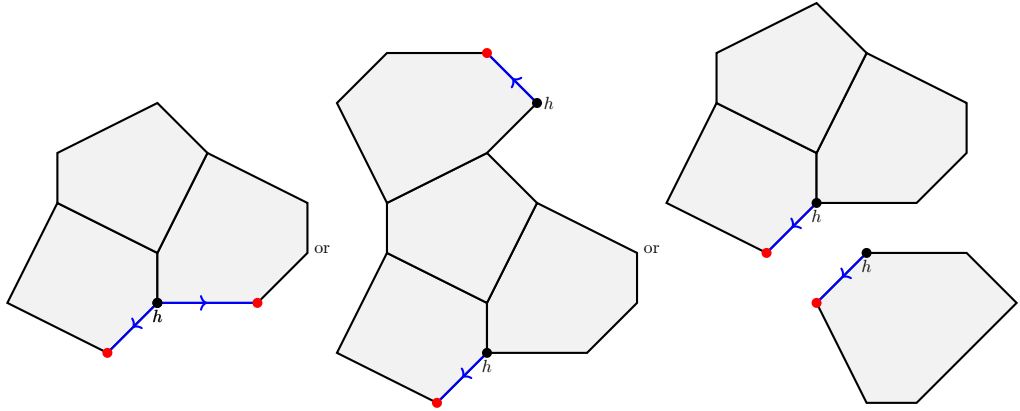
\begin{figure}[!ht]
    \centering
    \resizebox{0.9\textwidth}{!}{
    \begin{tikzpicture}[very thick, fill=black!5!white, baseline={(0,3)},decoration={
    markings,
    mark=at position 0.5 with {\arrow{>}}}]
        \filldraw (1,4) -- ++(2,-1) -- ++(0,-1) -- ++(-1,-1) -- ++(-2,1) -- cycle;
        \filldraw (3,3) -- ++(1,2) -- ++(2,-1) -- ++(0,-1) -- ++(-1,-1) -- ++(-2,0) -- node [pos=0, below] {$h$} cycle;
        \filldraw (1,4) -- ++(0,1) -- ++(2,1) -- ++(1,-1) -- ++(-1,-2) -- cycle;
        \draw[blue,postaction=decorate] (3,2) -- (2,1);
        \fill[black] (3,2) circle (0.1);
        \fill[red] (2,1) circle (0.1);
        \draw[blue,postaction=decorate] (3,2) -- (5,2);
        \fill[black] (3,2) circle (0.1) node [below] {$h$};
        \fill[red] (5,2) circle (0.1);
    \end{tikzpicture}
    or
    \begin{tikzpicture}[very thick, fill=black!5!white, baseline={(0,4)},decoration={
    markings,
    mark=at position 0.5 with {\arrow{>}}}]
        \filldraw (1,4) -- ++(2,-1) -- ++(0,-1) -- ++(-1,-1) -- ++(-2,1) -- cycle;
        \filldraw (3,3) -- ++(1,2) -- ++(2,-1) -- ++(0,-1) -- ++(-1,-1) -- ++(-2,0) -- node [pos=0, below] {$h$} cycle;
        \filldraw (1,4) -- ++(0,1) -- ++(2,1) -- ++(1,-1) -- ++(-1,-2) -- cycle;
        \filldraw (1,5) -- ++(-1,2) -- ++(1,1) -- ++(2,0) -- ++(1,-1) -- ++(-1,-1) -- cycle;
        \draw[blue,postaction=decorate] (3,2) -- (2,1);
        \draw[blue,postaction=decorate] (4,7) -- (3,8);
        \fill[black] (3,2) circle (0.1);
        \fill[black] (4,7) circle (0.1) node [right] {$h$};
        \fill[red] (2,1) circle (0.1);
        \fill[red] (3,8) circle (0.1);
    \end{tikzpicture}
    or
    \begin{tikzpicture}[very thick, fill=black!5!white, baseline={(0,1)},decoration={
    markings,
    mark=at position 0.5 with {\arrow{>}}}]
        \filldraw (1,4) -- ++(2,-1) -- ++(0,-1) -- ++(-1,-1) -- ++(-2,1) -- cycle;
        \filldraw (3,3) -- ++(1,2) -- ++(2,-1) -- ++(0,-1) -- ++(-1,-1) -- ++(-2,0) -- node [pos=0, below] {$h$} cycle;
        \filldraw (1,4) -- ++(0,1) -- ++(2,1) -- ++(1,-1) -- ++(-1,-2) -- cycle;
        \filldraw (3,0) -- ++(1,1) -- ++(2,0) -- ++(1,-1) -- ++(-2,-2) -- ++(-1,0) -- cycle;
        \draw[blue,postaction=decorate] (3,2) -- (2,1);
        \fill[black] (3,2) circle (0.1);
        \fill[red] (2,1) circle (0.1);
        \draw[blue,postaction=decorate] (4,1) -- (3,0);
        \fill[black] (4,1) circle (0.1) node [below] {$h$};
        \fill[red] (3,0) circle (0.1);
    \end{tikzpicture}
    }
    \caption{Gluing edges around $h$, highlighted edges may be glued with the orientation shown.}
    \label{fig:surface2}
    \end{figure}

    We now return to this vertex $h$ in $S$ and repeat the procedure with any gluable edges with $h$ at one end, attaching $2$-cells until the only unglued edges aout $h$ correspond to those in $\gamma$, or there are no unglued edges. As in \cref{fig:surface1}: in the former case, locally around $h$ we have a neighbourhood homeomorphic to the upper half plane in $\bR^2$; in the latter, we have a neighbourhood homeomorphic to $\bR^2$. Notice that any point in an internal edge we have glued is locally homeomorphic to $\bR^2$, and any point on a boundary edge is locally homeomorphic to the upper half plane, and any point internal to a $2$-cell is locally homeomorphic to $\bR^2$. Once this vertex $h$ is locally a surface -- possibly with boundary -- we turn our attention to a different vertex in $S$, and repeat the process outlined above.

    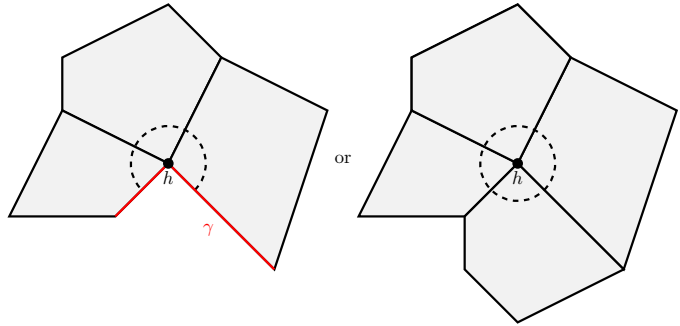
\begin{figure}[!ht]
    \centering
    \resizebox{0.6\textwidth}{!}{
    \begin{tikzpicture}[very thick, fill=black!5!white, baseline={(0,3)}]
        \filldraw (1,5) -- ++(2,1) -- ++(1,-1) -- ++(-1,-2) -- node [pos=0, below] {$h$} ++(-2,1) -- cycle;
        \filldraw (1,4) -- ++(2,-1) -- ++(-1,-1) -- ++(-2,0) -- cycle;
        \filldraw (3,3) -- ++(1,2) -- ++(2,-1) -- ++(-1,-3) -- cycle;
        \draw[dashed] (3.5,2.5) arc [start angle=-45, delta angle=270, radius = 0.707];
        \draw[red] (2,2) -- ++(1,1) -- node [pos=0.5, below left] {$\gamma$} ++(2,-2);
        \fill[black] (3,3) circle (0.1);
    \end{tikzpicture}
    or
    \begin{tikzpicture}[very thick, fill=black!5!white, baseline={(0,3)}]
        \filldraw (1,5) -- ++(2,1) -- ++(1,-1) -- ++(-1,-2) -- node [pos=0, below] {$h$} ++(-2,1) -- cycle;
        \filldraw (1,4) -- ++(2,-1) -- ++(-1,-1) -- ++(-2,0) -- cycle;
        \filldraw (3,3) -- ++(1,2) -- ++(2,-1) -- ++(-1,-3) -- cycle;
        \filldraw (2,2) -- ++(1,1) -- ++(2,-2) -- ++(-2,-1) -- ++(-1,1) -- cycle;
        \filldraw (1,5) -- ++(2,1) -- ++(1,-1) -- ++(-1,-2) -- node [pos=0, below] {$h$} ++(-2,1) -- cycle;
        \draw[dashed] (3.5,2.5) arc [start angle=-45, delta angle=360, radius = 0.707];
        \fill[black] (3,3) circle (0.1);
    \end{tikzpicture}
    }
    \caption{Near $h$ is a surface (with boundary).}
    \label{fig:surface1}
    \end{figure}

    If this process appears to terminate early, that is, we have a closed surface with boundary $S$, but there are still $2$-cells in $c$ unaccounted for in $S$, then these comprise a separate connected component in $S$. We can continue to build our surface diagram by picking any unaccounted-for $2$-cell in $c$ and repeating the construction.

    Since the $2$-chain $c$ is finite, we have only finitely many edges, vertices and $2$-cells, so this process must terminate. Note that it is impossible for the process to terminate without yielding a (possibly disconnected) surface with boundary, since any point internal to a $2$-cell has a neighbourhood homeomorphic to $\bR^2$, as does any point on a glued edge, and each vertex has a neighbourhood homeomorphic to either $\bR^2$ or the upper half plane, as do the unglued edges. Moreover, every unglued edge must correspond to an edge on the boundary of $c$, else it would have been gluable. Thus, once this construction terminates, every point of $S$ has a neighbourhood homeomorphic to $\bR^2$ or the upper half plane, that is, $S$ is a \emph{surface with boundary}.

    We have constructed the surface diagram $S$ in such a way that the filling map is clear. We define $\pi:S\ra c\subseteq X$ to be the map that sends each point in $S$ that is a vertex to the corresponding vertex in $c$, every point on an edge in $S$ to the corresponding point in the corresponding edge in $c$, and every point in a 2-cell to the corresponding point in the corresponding 2-cell in $c$. By construction, $n$-cells are mapped homeomorphically onto $n$-cells ($\pi$ restricts to the identity map on any given $n$-cell), and thus the map is combinatorial. The boundary components of $S$ correspond exactly to $\gamma$ by construction, so $\gamma=\pi(\del S)$ and $\pi$ is a surface diagram filling for $\gamma$. 
\end{proof}

\begin{rem}\label{rem:samearea}
    By construction, the area of the surface diagram $S$ constructed in the proof above is precisely the area of the $2$-chain that is its image -- they have the same number of $2$-cells.
\end{rem}

\subsection{Homological Area-Radius Pairs}
Gersten and Short's upper bound is in terms of an area-radius pair. A priori, these are defined only for finitely presented groups.
Consider the definition of the area-radius pair in \cref{sec:gerstenrecap}, and observe that it still makes sense when using a surface diagram in place of a Van Kampen diagram.

\begin{defn}\label{def:homarp}
    Given a group $H$ of type $FP_2$ with homological finite presentation $\mathcal{P_0}=\hpres[A]{R_0}$, we call the pair $(f,g)$ of functions $f,g:\bN\ra\bR$ a \emph{homological area-radius pair} if for any $1$-cycle $\gamma$ of length at most $n$ in $X(A,R_0)$, there exists a surface diagram filling $(S,\pi)$ for $\gamma$ and $\Area(S)\leq f(n)$ and $\Rad(S)\leq g(n)$.
\end{defn}

\begin{rem}
    If $H$ is a finitely presented group, then every Van Kampen diagram is a homological filling, so the usual area-radius pair is a homological area-radius pair for any fixed finite presentation.
\end{rem}
\begin{rem}
    Like Gersten and Short, we consider two homological area-radius pairs $(f,g), (\ol[f],\ol[g])$ to be equivalent if $f\simeq \ol[f]$ and $g\cong\ol[g]$.
\end{rem}

It remains to check that the homological area-radius pair is well-defined. We know from Proposition 2.24 of \cite{brady2021} that if $f$ is an isoperimetric inequality for a given homological finite presentation $\hpres[A]{R_0}$, then $f$ is $\simeq$-equivalent to an isoperimetric inequality for any other presentation $\hpres[A_1]{S_0}$. It remains to check that the analogous property holds for the radius function.

\begin{prop}\label{prop:harindep}
    Let $\hpres[A]{R_0},\hpres[A_1]{S_0}$ be two homological finite presentations for a group $H$ of type $FP_2$, and $(f,g)$ a homological area-radius pair for $\hpres[A]{R_0}$. Then there exists a homological area-radius pair $(\ol[f],\ol[g])$ for $\hpres[A_1]{S_0}$, with $f\simeq \ol[f]$ and $g\cong\ol[g]$.
\end{prop}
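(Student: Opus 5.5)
We prove this by translating fillings between the two homological Cayley complexes $X=X(A,R_0)$ and $X_1=X(A_1,S_0)$, following the proof of Proposition 2.1 of \cite{gersten2002}. By symmetry it suffices to build, from $(f,g)$ for $\hpres[A]{R_0}$, a pair $(\ol[f],\ol[g])$ for $\hpres[A_1]{S_0}$ with $\ol[f]\preccurlyeq f$ and $\ol[g](n)\le Cg(Cn)+C$. The reverse inequalities then come from the same construction run backwards, together with the area comparison already given by Proposition 2.24 of \cite{brady2021}.

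\textbf{Step 1: the dictionary.} For each $a_1\in A_1$ fix a word $u(a_1)$ over $\ol[A]$ representing it, and for each $a\in A$ a word $v(a)$ over $\ol[A_1]$. Let $L$ bound the lengths of all these words. Each relator $s\in S_0$, after substituting the $u$'s, gives a 1-cycle $u(s)$ in $X$. Each $a\in A$ gives a 1-cycle $a\,u(v(a))\inv$ in $X$ and, dually, a 1-cycle in $X_1$. Each $r\in R_0$, after substituting the $v$'s, gives a 1-cycle $v(r)$ in $X_1$. Since $H_1(X_1)=0$, \cref{lem:sdexists} provides a fixed surface diagram filling in $X_1$ for each of these finitely many cycles. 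Let $M$ bound both their areas and the maximal distance of a vertex in any of them to the boundary.

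\textbf{Step 2: transfer a filling.} Let $\gamma$ be a 1-cycle in $X_1$ with $|\gamma|\le n$. Replacing each $A_1$-edge by its $u$-word gives a 1-cycle $u(\gamma)$ in $X$ with $|u(\gamma)|\le Ln$. Take a surface diagram filling $(S,\pi)$ of $u(\gamma)$ with $\Area(S)\le f(Ln)$ and $\Rad(S)\le g(Ln)$. We build $S_1$ in three moves.
\begin{enumerate}
\item Subdivide each $A$-edge of $S$ into a $v$-path, so the 1-skeleton becomes a graph over $A_1$.
\item Replace each 2-cell of $S$, labelled $r\in R_0$, by the fixed $X_1$-surface filling of $v(r)$, glued along its now-subdivided boundary. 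The result is again a compact surface with boundary mapping combinatorially to $X_1$, and its boundary reads $v(u(\gamma))$.
\item Along each boundary edge $a_1$ of $\gamma$, attach the fixed filling of the cycle $a_1\,v(u(a_1))\inv$, which is a collar, so that the new boundary is exactly $\gamma$.
\end{enumerate}
Counting cells gives $\Area(S_1)\le M f(Ln)+Mn$, so $\ol[f]\preccurlyeq f$.

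\textbf{Step 3: the radius bound.} This is the step needing the most care. Take a vertex $w$ of $S_1$. It lies in one of the inserted pieces, so it is within $M$ of a vertex of a subdivided edge of $S$, hence within $M+L$ of an original vertex $x$ of $S$. The vertex $x$ reaches $\del S$ by a path of at most $g(Ln)$ edges, each becoming a path of length at most $L$ in $S_1$. A vertex of $\del S$ lies on some $u(a_1)$ segment; crossing the collar of that edge reaches $\gamma$ in at most $M$ further steps. This gives
$$\ol[g](n)=Lg(Ln)+2M+L,$$
so $\ol[g](n)\le Cg(Cn)+C$, as required for $\cong$.

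The real obstacle is confirming that the gluings in Step 2 keep $S_1$ a genuine surface diagram with boundary paths mapping onto loops of $\gamma$. Gluing surfaces along boundary circles and subdividing edges preserves being a surface. The delicate points are the vertices of $S$, where several inserted pieces meet: the link there is still a circle or an arc, because each inserted piece meets the old vertex only through the subdivided edges incident to it.

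One caveat on the area side. If the filling of a cycle $a\,u(v(a))\inv$ is not a disc, the cells from the reverse direction are not canonically determined. In that case we simply invoke Proposition 2.24 of \cite{brady2021} for $f\simeq\ol[f]$ and use the construction above only for the radius.
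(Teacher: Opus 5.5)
Your Steps 2--3 are a correct argument by a genuinely different route. The paper follows the Tietze-move structure of Proposition 2.24 of \cite{brady2021}. It first changes relators with the generators fixed, refining each 2-cell of $S$ by a fixed filling. It then adjoins the generators $A_1$ via the retraction of $X(A\sqcup A_1,R_0\sqcup R_1)$ onto $X(A,R_0)$. Finally it removes the generators $A$ by collapsing the pairs of $a^{-1}u_a(A_1)$-cells along interior $A$-edges, which is where the factor in front of $g$ enters. You instead translate a filling directly into $X(A_1,S_0)$, by substitution, refilling of relator cells and collars along $\gamma$, as in Proposition 2.1 of \cite{gersten2002}. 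This avoids the intermediate complexes and the collapsing step. It gives bounds of the same shape, $\overline{f}(n)=Mf(Ln)+Mn$ and $\overline{g}(n)=Lg(Ln)+O(1)$, in one pass, and puts all the surface-gluing checks into Step 2.

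Two small repairs are needed there. First, choose each fixed filling of $v(r)$ and of $a_1v(u(a_1))^{-1}$ with a single boundary circle reading that word. \cref{lem:sdexists} does not guarantee this, and without it the gluings in (ii) and (iii) are not defined; the paper's refinement step tacitly needs the same. Second, in Step 3 the last leg should walk along the subdivided segment $v(u(a_1))$, of length at most $L^2$, to an endpoint of $a_1$ on $\gamma$. The constant $M$ gives nothing for a vertex that already lies on the collar's boundary.

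Two pieces of your surrounding logic are wrong, although the conclusion survives. The reverse inequalities $f\preccurlyeq\overline{f}$ and $g(n)\le C\overline{g}(Cn)+C$ do not come from running the construction backwards, nor from Proposition 2.24 of \cite{brady2021}. Running it backwards from $(\overline{f},\overline{g})$ only yields some other pair for $\hpres[A]{R_0}$ dominated by $(\overline{f},\overline{g})$. Proposition 2.24 compares the optimal functions $\FA$, whereas $(f,g)$ is an arbitrary, possibly far from optimal, pair. The reverse inequalities instead follow at once from your explicit formulas: with $L,M\ge 1$ and $f,g$ nondecreasing, you have $f\le\overline{f}$ and $g\le\overline{g}$. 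The paper also assumes monotonicity implicitly, e.g.\ when it asserts $f\simeq f_1$.

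The closing caveat must also be dropped. A homological area-radius pair needs, for each $\gamma$, a single filling meeting both bounds. You cannot take the area bound from Proposition 2.24 and the radius bound from $S_1$. The caveat is also unnecessary: $\Area(S_1)\le Mf(Ln)+Mn$ is just a count of cells, valid whatever the topology of the collar fillings.
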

The proof below follows the structure of Proposition 2.24 of \cite{brady2021}, with homological AR pairs in place of the homological Dehn function.
\begin{proof}
    We first prove the simpler case where $A=A_1$.
    
    Let $\mathcal{P}_0=\hpres[A]{R_0}, \mathcal{P}_1=\hpres[A]{R_1}$ be homological finite presentations for $H$. Consider the homological Cayley complex $X_0$ for $\mathcal{P}_0$. Note that this space shares its $1$-skeleton with the homological Cayley complex $X_1$ for $\mathcal{P}_1$: both are the Cayley graph for $H$ with respect to generating set $A$.
    
    Let $\gamma$ be a loop of length at most $n$ in $X_0^{(1)}$ with surface diagram filling $(S,\pi)$ with a radius $r\le g(n)$ and area $a\le f(n)$. The boundary of each 2-cell $\del\sigma_i$ in $S$ is a loop labelled by $r_i\in R_0$, so can be filled by a surface diagram in $\mathcal{P}_1$ with radius $n_i$ and area $a_i$. Filling all the $\sigma_i$ in this way, and gluing all these surfaces together corresponding to the gluings in $S$ gives a surface diagram filling $S'$ for $\gamma$ with edge labels from $\mathcal{P}_1$. The radius of $S'$ is bounded above by
        $g(n) + \max_{r_i\in R_0}\{n_i\}$
    since for any vertex $v$ in this diagram, the distance in the $1$-skeleton from $v$ to the boundary $\del\sigma_i$ is at most $\max_i\{n_i\}$, and the distance from that vertex to the boundary $\del S'=\del S$ is at most $g(n)$. By construction, the area of $S'$ is bounded above by $\max_{r_i\in R_0}\{a_i\}\cdot f(n)$. Since $\max_i\{n_i\},\max_i\{a_i\}$ are constants, we have that $(\ol[f],\ol[g])$, defined by $\ol[f](n)=\max_{r_i\in R_0}\{a_i\}\cdot f(n),\ol[g](n)=g(n)+\max_{r_i\in R_0}\{n_i\}$ is a homological AR pair for $\mathcal{P}_1$ with $f\simeq \ol[f]$ and $g\cong \ol[g]$.

    \begin{figure}
        \centering
        \resizebox{0.7\textwidth}{!}{
        \begin{tikzpicture}[very thick, fill=black!5!white, baseline={(0,4)},decoration={
        markings,
        mark=at position 0.5 with {\arrow{>}}}]
            \filldraw (1,4) -- ++(2,-1) -- ++(0,-1) -- ++(-1,-1) -- ++(-2,1) -- cycle;
            \filldraw (3,3) -- ++(1,2) -- ++(2,-1) -- ++(0,-1) -- ++(-1,-1) -- ++(-2,0) -- cycle;
            \filldraw (1,4) -- ++(0,1) -- ++(2,1) -- ++(1,-1) -- ++(-1,-2) -- cycle;
            \filldraw (1,5) -- ++(-1,2) -- ++(1,1) -- ++(2,0) -- ++(1,-1) -- ++(-1,-1) -- cycle;
            \filldraw (4,7) -- ++(2,0) -- ++(1,-1) -- (6,4) -- (4,5) -- (3,6) -- cycle;
            \filldraw (0,2) -- (1,4) -- (1,5) -- (0,7)-- (-1,6) -- (-1,4) -- cycle;
        \end{tikzpicture}
        $\leadsto$
        \begin{tikzpicture}[very thick, fill=black!5!white, baseline={(0,4)},decoration={
        markings,
        mark=at position 0.5 with {\arrow{>}}}]
            \filldraw (1,4) -- ++(2,-1) -- ++(0,-1) -- ++(-1,-1) -- ++(-2,1) -- cycle;
            \filldraw (3,3) -- ++(1,2) -- ++(2,-1) -- ++(0,-1) -- ++(-1,-1) -- ++(-2,0) -- cycle;
            \filldraw (1,4) -- ++(0,1) -- ++(2,1) -- ++(1,-1) -- ++(-1,-2) -- cycle;
            \filldraw (1,5) -- ++(-1,2) -- ++(1,1) -- ++(2,0) -- ++(1,-1) -- ++(-1,-1) -- cycle;
            \filldraw (4,7) -- ++(2,0) -- ++(1,-1) -- (6,4) -- (4,5) -- (3,6) -- cycle;
            \filldraw (0,2) -- (1,4) -- (1,5) -- (0,7)-- (-1,6) -- (-1,4) -- cycle;
            
            \draw[color=black!30!white] (2.5,4.5) -- (3,6);
            \draw[color=black!30!white] (2.5,4.5) -- (4,5);
            \draw[color=black!30!white] (2.5,4.5) -- (1,5);
            \draw[color=black!30!white] (2.5,4.5) -- (3,3);
            
            \draw[color=black!30!white] (2,1) -- (1.5,3) -- (1,4);
            \draw[color=black!30!white] (3,2) -- (0.75,2.5) -- (1.5,3);
            \draw[color=black!30!white] (0.75,2.5) -- (0,2);

            \draw[color=black!30!white] (5,2) -- (4.5,3) -- (6,3) -- (4,5);
            \draw[color=black!30!white] (3,2) -- (4.5,3) -- (3,3);

            \draw[color=black!30!white] (7,6) -- (4,5) -- (4,7) -- cycle;

            \draw[color=black!30!white] (1,5) -- (1,8) -- (4,7);

            \draw[color=black!30!white] (1,5) -- (-1,6) -- (1,4) -- (-1,4) -- cycle;

            \draw (1,4) -- ++(2,-1) -- ++(0,-1) -- ++(-1,-1) -- ++(-2,1) -- cycle;
            \draw (3,3) -- ++(1,2) -- ++(2,-1) -- ++(0,-1) -- ++(-1,-1) -- ++(-2,0) -- cycle;
            \draw (1,4) -- ++(0,1) -- ++(2,1) -- ++(1,-1) -- ++(-1,-2) -- cycle;
            \draw (1,5) -- ++(-1,2) -- ++(1,1) -- ++(2,0) -- ++(1,-1) -- ++(-1,-1) -- cycle;
            \draw (4,7) -- ++(2,0) -- ++(1,-1) -- (6,4) -- (4,5) -- (3,6) -- cycle;
            \draw (0,2) -- (1,4) -- (1,5) -- (0,7)-- (-1,6) -- (-1,4) -- cycle;
        \end{tikzpicture}
        }
        \caption{Obtaining the surface diagram $S'$ by refining a surface diagram $S$.}
        \label{fig:newradius}
    \end{figure}
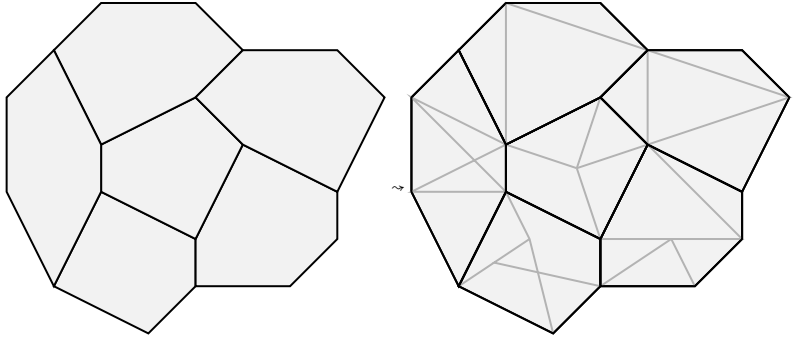
    
    This brings us to the general case. Suppose now that $\hpres[A]{R_0}$ and $\hpres[A_1]{S_0}$ are two homological finite presentations for $H$. For each generator $b$ in $A_1$, there is a word $v_b(A)$ in $F(A)$ with $b=_Hv_b(A)$. Fix a word $v_b(A)$ for each $b\in A_1$ and let $R_1$ be the set of relators $\left\{b\inv v_b(A)\,|\,b\in A_1\right\}\subseteq F(A\cup A_1)$. As in \cite{brady2021}, we construct a new complex $X_1$ from the homological Cayley complex $X(A,R_0)$ as follows: for each generator $b\in A_1$ and for each vertex $x\in X(A,R_0)$, we attach an edge $e_b^x$ labelled by $b$ with initial vertex $x$ and terminal vertex $x\cdot v_b(A)$; then for each such edge, we attach a $2$-cell $D_b^x$ to the closed path reading $b\inv v_b(A)$ from $x$. So the complex $X_1$ is
    $$X_1\coloneqq X(A,R_0)\cup \bigcup_{\substack{{x\in H}\\{b\in A_1}}}e_b^x\cup \bigcup_{\substack{{x\in H}\\{b\in A_1}}}D_b^x$$
    The $H$-action on $X=X(A,R_0)$ is extended to a free, vertex transitive and cocompact action on this new complex by defining $g\cdot e_b^x=e_b^{g\cdot x}$ and $g\cdot D_b^x=D_b^{g\cdot x}$. As noted in the proof of Proposition 2.24 of \cite{brady2021}, we have a deformation retraction $r:X_1\ra X$ which sends edges labelled $b$ to paths labelled $v_b(A)$ with the same initial and terminal vertices, and collapses the $2$-cells $D_b^x$. The existence of this deformation retraction tells us $H_1(X_1)\cong H_1(X(A,R_0))$ is trivial, so $\hpres[A\sqcup A_1]{R_0\sqcup R_1}$ is a homological finite presentations for $H$. One analogously constructs a homological Cayley complex $X_2=X(A\sqcup A_1,S_0\sqcup S_1)$, where $S_1=\{a\inv u_a(A_1)|a\in A)\}$. Let $k=\max\{v_b(A)\,|\,b\in A_1\}\cup\{u_a(A_1)\,|\,a\in A\}$.

    Consider a 1-cycle $\gamma$ of length $n$ in $X_1$. Then $r(\gamma)$ is a 1-cycle of length at most $kn$ in $X$. Let $S$ be a surface diagram for $r(\gamma)$ with $\Area(S)\le f(kn)$ and $\Rad(S)\le g(kn)$. Every edge with label $b\in A_1$ in $\gamma$ corresponds to a subarc of $r(\gamma)$ with label $v_b(A)$. For each such subarc in $S$, glue a $2$-cell with boundary label $b\inv v_b(A)$. This yields a surface $S'$ with $\Area(S')\le \Area(S)+n\le f(kn)+n$ and $\Rad(S')\le \Rad(S)+k\le g(kn)+k$. We can extend $\pi$ to a map $\pi':S'\ra X_1$ by mapping the new 2-cells to the ones collapsed when the deformation retraction $r$ was applied to $\gamma$. This yields a surface diagram for $\gamma$, and we deduce that $(f_1,g_1)$ is a homological AR pair for $\hpres[A\sqcup A_1]{R_0\sqcup R_1}$, where $f_1(n)\coloneqq f(kn)+n$ and $g_1(n)\coloneqq g(kn)+k$, so $f\simeq f_1$ and $g\cong g_1$.

    By the first part of the proof, there exists a homological AR pair $(f_2,g_2)$ for $X_2$ with $f_1\simeq f_2$ and $g_1\cong g_2$. Now let $\gamma$ be a 1-cycle of length $n$ in $\ol[X]=X(A_1,S_0)$. View $\gamma$ as a 1-cycle in $X_2$, let $(S,\pi)$ be a surface diagram filling for $\gamma$ with $\Area(S)\le f(n)$, $\Rad(S)\le g(n)$. Note that no edges with label $a\in A$ lie on $\gamma=\del S$. Suppose that such an $A$-edge appears in the interior. The only 2-cells with boundary label containing a letter $a$ are those with boundary $a\inv u_a(A_1)$. Since $S$ is a surface, it follows that there are two 2-cells which abut this edge, both with boundary label $a\inv u_a(A_1)$, as shown in \cref{fig:internaledge}. Collapsing all such such 2-cells, thereby removing these $A$-edges, yields a surface diagram $S'$ with boundary $\gamma$, $\Area(S')\le\Area(S)$ and $(\Rad(S')\le k\cdot\Rad(S)$. Thus we see that $(\ol[f],\ol[g])$ is a homological AR pair for $\hpres[A_1]{S_0}$, where $\ol[f]\coloneqq f_2\simeq f_1\simeq f$ and $\ol[g]\coloneqq kg_2\cong g_1\cong g$.

    \begin{figure}[!ht]
    \centering
    \resizebox{0.4\textwidth}{!}{
    \begin{tikzpicture}[very thick, decoration={
    markings,
    mark=at position 0.5 with {\arrow{>}}}]
        \filldraw[fill=black!5!white] (4,2) 
            arc (0:360:2)
            node [pos=0.3] (t) {}
            node [pos=0.3, above left] {$v$} 
            node [pos=0.8] (u) {}
            node [pos=0.8, below right] {$va=vu_a(A_1)$};
        \fill (t.center) circle (0.08);
        \fill (u.center) circle (0.08);
        

        \draw[postaction=decorate] (4,2) arc (0:-10:2)
            node [right] {$u_a(A_1)$};
        \draw[postaction=decorate] (0,2) arc (180:190:2)
            node [left] {$u_a(A_1)$};
        \draw[postaction=decorate] (t.center) to 
            node [pos=0.5, right] {$a$}
            (u.center);
    \end{tikzpicture}
    }
    \caption{An internal $A$-edge.}
    \label{fig:internaledge}
\end{figure}
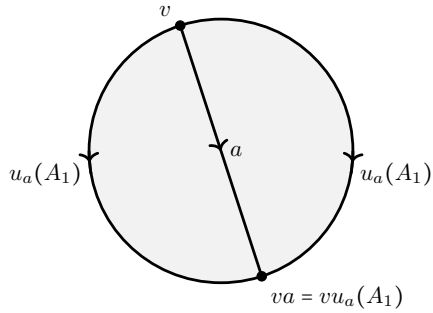
\end{proof}

\subsection{A Homological Finite Presentation for \texorpdfstring{$H$}{H}}
Now we are able to state an analogue of Gersten and Short's result for groups of type $FP_2$:
\begin{bigthm}\label{thm:homger}
    Let $H$ be an extension of a group $K$ of type $FP_2$ by a finitely generated free group $F_n$, so that we have the short exact sequence
    \begin{center}
        $1\ra K\ra H\ra F_n\ra 1$.
    \end{center}
    Then $H$ is of type $FP_2$, and if $(f,g)$ is a homological area-radius pair for $H$ with $f(n)\geq n$ for all $n$, there is a constant $C >1$ such that $\FA_K\preccurlyeq C^gf$.
\end{bigthm}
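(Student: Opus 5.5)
Since $F_n$ is free, the extension splits: $H\cong K\rtimes F_n$ with stable letters $t_1,\dots,t_n$ acting by automorphisms $\phi_i$ of $K$. I would begin by writing down an explicit homological finite presentation for $H$. Take a homological finite presentation $\hpres[A]{R_0}$ for $K$. For each $a\in A$ and each $i$, fix words $\Phi_i(a)$ and $\Psi_i(a)$ in $F(A)$ representing $\phi_i(a)$ and $\phi_i\inv(a)$, as in \cref{not:bigphi}. Then take
$$\mathcal{P}_H=\hpres[A\cup\{t_1,\dots,t_n\}]{R_0\cup\{t_i\inv a t_i\Phi_i(a)\inv\}}.$$
To see $H_1(X(\mathcal{P}_H))=0$, note that the complex is a graph of spaces over the Bass--Serre tree of $F_n$, whose vertex spaces are copies of $X(A,R_0)$ and whose edge spaces are copies of the Cayley graph of $K$ thickened by the $t$-cells. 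A Mayer--Vietoris argument then works, using that each edge space has $H_0=\bZ$ and each vertex space has $H_1=0$. This recovers that $H$ is of type $FP_2$. By \cref{prop:harindep}, the pair $(f,g)$ may be taken, up to equivalence, as a homological AR pair for $\mathcal{P}_H$.

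Next I would prove homological analogues of \cref{lem:areacap1} and \cref{lem:areacap2} for the homological Cayley complex of $K$. The analogue of the first lemma says that if a $1$-cycle $\gamma$ in $X(A,R_0)$ bounds a surface diagram of area $a$, then $\Phi_i(\gamma)$ bounds one of area at most $S a$. To prove it, replace each $2$-cell labelled $r\in R_0$ by a fixed surface filling of the cycle $\Phi_i(r)$, and glue these fillings exactly as the original cells were glued. The analogue of the second lemma goes the other way: from a filling of $\Phi_i(\gamma)$ we get a filling of $\Psi_i\Phi_i(\gamma)$, and then we add at most $S''|\gamma|$ cells filling the loops $a\inv\Psi_i\Phi_i(a)$.

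For the main argument, let $\gamma$ be a $1$-cycle in $X(A,R_0)$ with $|\gamma|\le m$. View it in $X(\mathcal{P}_H)$ and take a surface diagram $(S,\pi)$ with area at most $f(m)$ and radius at most $g(m)$. Since $\partial S$ contains no $t$-edges, the $t$-cells of $S$ form $t$-annuli; these are the $t$-cycles. Each $t$-annulus has two boundary cycles, one labelled by a word $w$ and the other by $\Phi_i(w)$, and its length is at most the number of $t$-cells, so at most $f(m)$. Cutting $S$ along all $t$-annuli gives $K$-components, and the goal is to eliminate the annuli one at a time. The annuli need not bound discs, so instead I would organise them using the projection $S\to$ Bass--Serre tree for $F_n$ induced by $\pi$. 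Each $K$-component maps to a single vertex of the tree and each annulus maps to an edge. Because the tree has no cycles and $\partial S$ lies over a single base vertex $v_0$, the components can be ordered by tree distance from $v_0$. This distance is at most $g(m)$, since every vertex of $S$ lies within $g(m)$ of $\partial S$ and each $t$-edge moves the projection by one. I then push down from the farthest layer inward. A component $C$ at maximal depth $k$ has all of its annuli joining it to depth $k-1$. Translating $C$ by the appropriate element $t_i^{\pm1}$ turns it into a $K$-filling, one level closer to $v_0$, of the outer boundary cycles, namely $\Phi_i^{\pm1}$ applied to its boundary labels. By the lemma analogues, this costs a factor of at most $C$ in area plus a term linear in the annulus lengths, which is $\le f(m)$. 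The annuli are then discarded and the result glued along the shared boundary. After at most $g(m)$ rounds, everything lives over $v_0$, giving a surface diagram for $\gamma$ in $X(A,R_0)$ of area at most $C^{g(m)}(f(m)+f(m))$. Remark~\ref{rem:samearea} converts this into a bound on $\HArea$, so $\FA_K\preccurlyeq C^g f$, using $f(n)\ge n$ to absorb the linear terms.

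I expect the main obstacle to be the pushing-down step. The boundary cycles of a $t$-annulus on the $K$-side may be disconnected from the rest of their component, and a single component may meet several annuli over different tree edges at depth $k-1$. One must check that applying $\Phi_i$ separately along each edge is compatible, and that the linear error terms, summed over all annuli in one layer, stay bounded by the total annulus length $\le f(m)$ rather than accumulating. My first attempt would be to process the components over each tree vertex of depth $k$ simultaneously. Components over distinct vertices at depth $k$ lie in distinct cosets, so they never interact. Within one vertex, all annuli leading to the parent vertex carry the same letter $t_i^{\pm1}$, so a single automorphism suffices.
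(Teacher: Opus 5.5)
Your proposal is correct and follows essentially the same route as the paper. It uses the same homological finite presentation for $H$ built from one for $K$, and homological analogues of Gersten--Short's Lemmas 3.1 and 3.2. It organises the $t$-annuli by coset via the Bass--Serre tree projection, which is the paper's set $W_c$ of reduced words. It bounds depth by the radius and pushes down one layer at a time from the deepest cosets for at most $g(|\gamma|)$ rounds. The differences are minor: you prove $H_1(X)=0$ by a Mayer--Vietoris argument over the tree rather than by explicitly pushing loops into one coset, and you explicitly invoke \cref{prop:harindep} to transfer $(f,g)$ to this presentation, which the paper leaves implicit.
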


It is known that $H$ is necessarily of type $FP_2$, following Proposition 2.2 of \cite{kochloukova2016homologicalfinitenesspropertiesfibre}. Gersten and Short's proof opens by constructing a finite presentation for $H$ from one for $K$. Below, we claim that the analogous construction yields a homological finite presentation for $H$, thus giving an alternative proof that a free extension of a group of type $FP_2$ is also of type $FP_2$. Recall that every extension with free quotient is split, so $F_n$ is naturally a subgroup of $H$.

Conjugation of $K\le H$ by a free generator $t_i$ of $F_n$ gives an automorphism $\phi_i:K\ra K$. Let $\Phi_i$ be a lift of $\phi_i$ to a semigroup endomorphism as in \cref{not:bigphi}.
\begin{lem}\label{lem:hfp2}
    Let $H$ be an extension of a group $K=\pres[A]{R}$ of type $FP_2$ by a finitely generated free group $F_n$, so that we have the short exact sequence
        $$1\ra K\ra H\ra F_n\ra 1.$$
    If $\hpres[A]{R_0}$ is a homological finite presentation for $K$, then $$\hpres[A\cup \left\{ t_1,\ldots,t_n\right\}]{R_0\cup\left\{t_i\inv a_j t_i\Phi_i(a_j)\inv\,\middle|\,a_j\in A,i\in\left\{1,\ldots,n\right\}\right\}}$$ is a homological finite presentation for $H$.
\end{lem}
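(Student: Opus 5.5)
We need to show two things: that the listed relators hold in $H$ and that the resulting complex $Y$ satisfies $H_1(Y)=0$. Write $T=\{t_1,\ldots,t_n\}$, $R_T=\{t_i\inv a_j t_i\Phi_i(a_j)\inv\}$ and $Y=X(A\cup T,R_0\cup R_T)$. First, $A\cup T$ generates $H$ because the sequence splits, and each relator in $R_T$ is trivial in $H$ by the definition of $\phi_i$ and $\Phi_i$. The relators in $R_0$ are trivial in $K\le H$. Together with all of $R$, these relators give the standard presentation of the semidirect product $K\rtimes F_n$, so $\hpres[A\cup T]{R_0\cup R_T}$ is a legitimate candidate.

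To prove $H_1(Y)=0$, we take an arbitrary $1$-cycle $\gamma$ in $Y^{(1)}$ and show it is a boundary. We use the coarse product structure of $Y$. Let $p:H\ra F_n$ be the quotient. Collapsing each $A$-edge of $Y$ to a point gives a map from $Y^{(1)}$ to the Cayley tree $\mathcal{T}$ of $F_n$. The preimage of a vertex $u\in F_n$ is the coset $\tilde u K$, where $\tilde u$ is a lift of $u$ via the splitting. The $A$-edges and $R_0$-cells on this coset form a translate of $X_K=X(A,R_0)$, and hence have trivial $H_1$. The $T$-edges between the coset $\tilde uK$ and the coset $\tilde ut_iK$ are joined by the $R_T$-cells, which form strips.

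The argument is an induction on the number of $T$-edges in $\gamma$, counted with multiplicity. If there are none, then each connected part of $\gamma$ lies in a single coset, and $H_1$ of that translate of $X_K$ vanishes, so $\gamma$ is a boundary. Otherwise, consider the image of $\gamma$ in the tree $\mathcal{T}$, which is a $1$-cycle in the tree. The net count of $t_i$-edges crossing each tree edge is zero, so we pick a vertex $u$ of $\mathcal{T}$ that is extremal, i.e.\ a leaf of the finite subtree spanned by the image. All $T$-edges of $\gamma$ incident to the coset $\tilde uK$ then cross a single tree edge, say labelled $t_i$. We push each of them across to the neighbouring coset using the strips. For every $A$-edge $e$ of $\gamma$ at $x\in\tilde uK$, with $x$ on the side where $t_i$ is outgoing, the $R_T$-cell at $x t_i\inv$ (or at $x$, depending on the orientation of $t_i$) has boundary $e - (\text{path }\Phi_i(a_j)^{\pm1}) \pm (t_i\text{-edges})$.

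Adding suitable multiples of these cells to $\gamma$ replaces $\gamma|_{\tilde uK}$ by its image under the bijection $x\mapsto xt_i^{\pm1}$, relabelled by $\Phi_i$ or $\Psi_i$. After adding these cells, the $t_i$-edges at the endpoints cancel in pairs, because the part of $\gamma$ in $\tilde uK$ together with its $T$-edges has zero total boundary. Concretely, the boundary of the restriction of $\gamma$ to $\tilde uK$ equals minus the boundary of the $t_i$-edge part. Hence $\gamma$ is homologous to a cycle with strictly fewer $T$-edges, and the induction closes.

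The main obstacle is this pushing step, in particular keeping track of orientation, i.e.\ whether $\tilde uK$ lies at the $t_i$ or the $t_i\inv$ end. The case where $\tilde uK$ is on the side where we need $\Psi_i$ requires extra care, because $R_T$ only contains cells for $\Phi_i$. We handle it by working chain-level on the vertex set: adding the boundary of the $t_i$-cell at each edge of the neighbouring coset moves $A$-edges in either direction. The cells themselves always exist, so only sign bookkeeping is needed. The $\Psi_i$ side is a chain in the opposite coset, and this causes no problem since all we need is that the pushed cycle lives in $\tilde ut_i^{\pm1}K$ together with fewer $T$-edges. An alternative that avoids the bookkeeping is a Mayer–Vietoris argument for the graph-of-spaces decomposition of $Y$ over the tree $\mathcal{T}$. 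The vertex spaces are copies of $X_K$ with $H_1=0$, and the edge spaces are $X_K^{(0)}\times[0,1]$ with attached strips, whose $H_0$ maps injectively. The edge strips are homotopy equivalent to the vertex set via the $A$-edges, which gives surjectivity on $H_0$. Either way, we conclude $H_1(Y)=0$.
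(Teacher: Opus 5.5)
\textbf{Gap in the $t_i$-target case.} Your outline is the paper's: a tree of cosets, working from an extremal coset, pushing across with the $R_T$-cells, and using $H_1(X_K)=0$ for $X_K=X(A,R_0)$. Your chain-level treatment of the case where $t_i$ points \emph{out of} $\tilde uK$ is correct, and the $t_i$-edges cancel exactly as you say.

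The gap is the other case, where $\tilde uK$ is the $t_i$-target of its neighbour. This is not sign bookkeeping. An $R_T$-cell based at $z$ has as sides the $a$-edge at $zt_i^{-1}$ in the lower coset and the $\Phi_i(a)$-path at $z$ in the upper coset. So adding or subtracting $R_T$-cells changes the part of a chain lying in $\tilde uK$ only by integer combinations of $\Phi_i(a)$-paths. A general $A$-chain in $\tilde uK$, even a single $a$-edge, need not be of that form. Hence the $R_0$-cells of $\tilde uK$, i.e.\ $H_1(X_K)=0$, must be used inside the inductive step, not only in the base case.

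The paper does exactly this: it fills each loop $a_j\,(\Phi_i\Psi_i(a_j))^{-1}$ in $Kw$ by an $R_0$-chain, then pushes the $\Phi_i(\Psi_i(a_j))$-path down cell by cell. In your chain language there is an equally short fix:
\begin{enumerate}
\item Write the $t_i$-part of $\gamma$ as $\sum_y m_y t_y$, with $y$ in the lower coset. Then $\sum_y m_y=0$.
\item Since $\Gamma(K,A)$ is connected, choose a $1$-chain $\beta$ in the lower coset with $\partial\beta=-\sum_y m_y[y]$.
\item Adding the $R_T$-cells along $\beta$ kills all these $t_i$-edges. It leaves $\gamma|_{\tilde uK}-\Phi_i(\beta)$, where $\Phi_i(\beta)$ is the pushed-up copy of $\beta$.
\item This remainder is a cycle in $\tilde uK$, so it bounds there.
\end{enumerate}
Also choose $u$ as a leaf of the hull of the tree edges actually crossed by $T$-edges of $\gamma$; otherwise the leaf may carry no $T$-edges and the count need not drop.

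\textbf{The Mayer--Vietoris alternative.} This is a genuinely different and cleaner route, but its justification as written is wrong. $Y$ is literally the total space of a graph of spaces over $\mathcal{T}$. The vertex spaces are copies of $X_K$ and the edge spaces are copies of $\Gamma(K,A)$. Each $R_T$-cell is $e\times[0,1]$, glued at one end by inclusion and at the other by $y\mapsto yt_i$, $a\mapsto\Phi_i(a)$.

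The edge spaces are not homotopy equivalent to a vertex set, and surjectivity on $H_0$ is irrelevant. What you need is that $\bigoplus_e H_0(Y_e)\to\bigoplus_v H_0(Y_v)$ is injective. This holds because each edge space is connected, so the map is the simplicial boundary map of $\mathcal{T}$, which is injective since $H_1(\mathcal{T})=0$. Together with $H_1(Y_v)=0$, exactness gives $H_1(Y)=0$. If the edge spaces really were discrete vertex sets, injectivity would fail.

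Stated this way, the argument avoids $\Psi_i$ and the induction entirely. It isolates exactly what is used: $H_1(X_K)=0$, connectivity of $\Gamma(K,A)$, and that the Bass--Serre graph is a tree. The paper's pushing argument has the advantage of being the same mechanism that is later made quantitative in the proof of the main theorem.
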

\begin{proof}
    Note first that the proof of Theorem B in \cite{gersten2002} tells us that $\pres[A\cup \left\{ t\right\}]{R\cup\left\{t\inv a_j t\Phi(a_j)\inv\,|\,a_j\in A\right\}}$ is a presentation for $H$.
    
    We can picture the coarse structure of the Cayley graph $\Gamma(H,A\cup\left\{t_i,\ldots,t_n\right\})$ as looking like the Cayley graph of the free group $F(t_1,\ldots,t_n)$ (the $2n$-valent tree), where each vertex is replaced by a copy of the Cayley graph $\Gamma(K,A)$ for $K$, with each copy connected to the $2n$ adjacent copies by edges labelled $t_i$. Each `vertex' of this 2n-valent tree corresponds to a coset of the subgroup $K$ in $H$.

        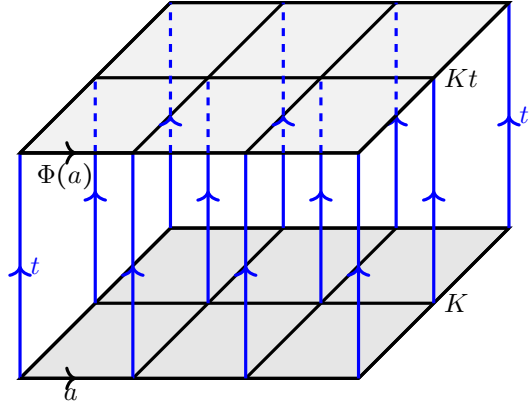
\begin{figure}[!ht]
    \centering
    \begin{tikzpicture}[very thick,decoration={
    markings,
    mark=at position 0.5 with {\arrow{>}}}]
        \filldraw[fill=black!10!white]
            (0,0) -- (4.5,0) -- (6.5,2) -- (2,2) -- cycle;
        \draw (0,0) to (4.5,0);
        \draw (1,1) to node [right, pos=1] {$K$} (5.5,1);
        \draw (2,2) to (6.5,2);
        \draw (0,0) to (2,2);
        \draw (1.5,0) to (3.5,2);
        \draw (3,0) to (5,2);
        \draw (4.5,0) to (6.5,2);

        \draw[blue,postaction=decorate] ($(0,0)+(0,0)$) to node [right,pos=0.5] {$t$} ($(0,0)+(0,3)$);
        \draw[blue,postaction=decorate] ($(1.5,0)+(0,0)$) to ($(1.5,0)+(0,3)$);
        \draw[blue,postaction=decorate] ($(3,0)+(0,0)$) to ($(3,0)+(0,3)$);
        \draw[blue,postaction=decorate] ($(4.5,0)+(0,0)$) to ($(4.5,0)+(0,3)$);
        \draw[blue,postaction=decorate] ($(1,1)+(0,0)$) to ($(1,1)+(0,3)$);
        \draw[blue,postaction=decorate] ($(2.5,1)+(0,0)$) to ($(2.5,1)+(0,3)$);
        \draw[blue,postaction=decorate] ($(4,1)+(0,0)$) to ($(4,1)+(0,3)$);
        \draw[blue,postaction=decorate] ($(5.5,1)+(0,0)$) to ($(5.5,1)+(0,3)$);
        \draw[blue,postaction=decorate] ($(2,2)+(0,0)$) to ($(2,2)+(0,3)$);
        \draw[blue,postaction=decorate] ($(3.5,2)+(0,0)$) to ($(3.5,2)+(0,3)$);
        \draw[blue,postaction=decorate] ($(5,2)+(0,0)$) to ($(5,2)+(0,3)$);
        \draw[blue,postaction=decorate] ($(6.5,2)+(0,0)$) to node [right,pos=0.5] {$t$} ($(6.5,2)+(0,3)$);

        \filldraw[fill=black!5!white]
            (0,3) -- (4.5,3) -- (6.5,5) -- (2,5) -- cycle;
        \draw ($(0,0)+(0,3)$) to ($(4.5,0)+(0,3)$);
        \draw ($(1,1)+(0,3)$) to node [right, pos=1] {$Kt$} ($(5.5,1)+(0,3)$);
        \draw ($(2,2)+(0,3)$) to ($(6.5,2)+(0,3)$);
        \draw ($(0,0)+(0,3)$) to ($(2,2)+(0,3)$);
        \draw ($(1.5,0)+(0,3)$) to ($(3.5,2)+(0,3)$);
        \draw ($(3,0)+(0,3)$) to ($(5,2)+(0,3)$);
        \draw ($(4.5,0)+(0,3)$) to ($(6.5,2)+(0,3)$);

        \draw[dashed,blue] ($(1,1)+(0,2)$) to ($(1,1)+(0,3)$);
        \draw[dashed,blue] ($(2.5,1)+(0,2)$) to ($(2.5,1)+(0,3)$);
        \draw[dashed,blue] ($(4,1)+(0,2)$) to ($(4,1)+(0,3)$);
        \draw[dashed,blue,->] ($(2,2)+(0,1)$) to ($(2,2)+(0,1.5)$);
        \draw[dashed,blue,->] ($(3.5,2)+(0,1)$) to ($(3.5,2)+(0,1.5)$);
        \draw[dashed,blue] ($(2,2)+(0,1.5)$) to ($(2,2)+(0,3)$);
        \draw[dashed,blue] ($(3.5,2)+(0,1.5)$) to ($(3.5,2)+(0,3)$);
        \draw[dashed,blue] ($(5,2)+(0,1.5)$) to ($(5,2)+(0,3)$);

        \draw[postaction=decorate] (0,0) to node [below, pos=0.45] {$a$} (1.5,0);
        \draw[postaction=decorate] (0,3) to node [below, pos=0.4] {$\Phi(a)$} (1.5,3);
    \end{tikzpicture}
    \caption{Sketch of part of $\Gamma(H,A\cup\left\{t\right\})$, where $n=1$.}
    \label{fig:layers}
\end{figure}
    
    We construct a 2-complex $X$ from $\Gamma(H,A\cup\left\{t_i,\ldots,t_n\right\})$ by gluing, for each vertex $x\in H$ and for each of the finitely many relations $r\in R_0\cup\left\{t_i\inv a_j t_i\Phi_i(a_j)\inv\,|\,a_j\in A, i\in\left\{1,\ldots,n\right\}\right\}$, a $2$-cell with boundary the loop labelled by $r$ beginning at $x$. Under this construction, the copies of $\Gamma(K,A)$ corresponding to cosets $Kw$ in $H$ have become copies of the homological Cayley complex $X(K,A,R_0)$. Importantly, each copy of the homological Cayley complex for $K$ has trivial first homology. It suffices to prove that $H_1(X)$ is trivial, for which we show that every $1$-cycle admits a filling by a $2$-chain. It suffices to check the result for a loop $\gamma$ in $X^{(1)}$.

    A loop $\gamma$ in $X^{(1)}$ can only intersect finitely many of the cosets $Kw$ where $w$ is a reduced word in $F(t_1,\ldots,t_n)$.
    A continuous loop $\gamma$ can only traverse from a coset $Kw$ to an adjacent one $Kwt_i$ or $Kwt_i\inv$ by an edge labelled $t_i$.
    Consider the set $S$ of the words $w$ such that $\gamma$ intersects $Kw$ non-trivially. $S$ must contain a unique word of minimal length. For suppose that $u\neq v$ are two words of minimal length, then there must exist a path along $\gamma$ from $Ku$ to $Kv$. If this path traverses an edge labelled $t_i$, then it moves into the coset $Kut_i$. We must have $|ut_i|=|u|+1$ by minimality of $|u|$. Any subsequent $t$-edges traversed can only move to $Ku'$ for $|u'|>|u|$ or back into $Ku$, where the above argument applies again. There is no way of changing any of the first $|u|$ letters of $u$ by traversing $t$-edges without moving into a coset $Ku'$ with $|u'|<|u|$, contradicting minimality. Set $\ol[w]$ to be the unique word of minimal length in $S$.
    
    Pick a word $w\in S$ of maximal length. If the last letter of $w$ is $t_i$, that is, $w=w'\ast t_i=w't_i$ for some strictly shorter word $w'$ of length $|w|-1$. Let $\Psi_i$ be a lift of $\phi_i\inv$ as in \cref{not:bigphi}. For each edge labelled $a_j$ that $\gamma$ traverses in $Kw$, there are $2$-chains in $Kw$ with boundary $a_j(\Phi_i\circ\Psi_i)(a_j)\inv$, and for each edge $a_k$ in $\Psi_i(a_j)$, there is a $2$-cell  $t\inv a_k t\Phi_i(a_k)\inv$. Taking all such $2$-chains and $2$-cells allows us to `push down' the part of the loop $\gamma$ to a homologous loop $\gamma'$ which is the same as $\gamma$ except the section in $Kw$ is replaced by a homologous section in $Kw'$. If instead the last letter of $w$ is $t_i\inv$, that is , $w=w'\ast t_i\inv=w't_i\inv$, then take, for all edges $a_j$ traversed by $\gamma$ in $Kw$, $2$-cells of the form $t\inv a_j t\Phi_i(a_j)\inv$. The collection of such $2$-cells allows us to `push up' the part of the loop $\gamma$ in $Kw$ to a homologous part in $Kw'$ to obtain a homologous loop $\gamma'$ which does not intersect $Kw$.

    Applying the process above repeatedly to all words in $S$ from longest to shortest eventually yields a loop $\ol[\gamma]$ contained in $K\ol[w]$. $\ol[\gamma]$ is the boundary of a $2$-chain in $K\ol[w]$. By taking the union of this $2$-chain with all the $2$-cells required to push $\gamma$ into $K\ol[w]$, we obtain a $2$-chain whose boundary is $\gamma$. Thus $X$ is a homological Cayley complex for $H$.
\end{proof}

\subsection{New Filling from an Automorphism}\label{subsec:newfill}
Section 3 of \cite{gersten2002} gives upper bounds for the area of a disc diagram after the application of an automorphism. The methods of Gersten and Short for their Lemmas 3.1 and 3.2 apply with minimal adjustments to the case of our surface diagram analogue.

Let $\mathcal{P}_0=\hpres[A]{R_0}$ be a homological finite presentation for a group $H$ of type $FP_2$, and let $\phi:H\ra H$ be a group automorphism. Let $\Phi$ and $\Psi$ be as in \cref{not:bigphi}. For a $1$-cycle $\gamma$ in $X(A,R_0)$, we denote by $\Phi(\gamma)$ the $1$-cycle in $X(A,R_0)$ obtained by mapping each vertex $x\in\gamma$ to $\Phi(x)$ and each edge labelled $a$ with initial vertex $x$ and terminal vertex $y$ to the path labelled by the word $\Phi(a)$ from $\Phi(x)$ to $\Phi(y)$. Observe that this is well-defined, since $\phi$ maps words representing the identity in $H$ to other such words, and thus so does $\Phi$. If $\gamma=\gamma_1+\cdots+\gamma_\ell$ is the decomposition of a $1$-cycle $\gamma$ into distinct loops $\gamma_i$, then $\Phi(\gamma)=\Phi(\gamma_1)+\cdots +\Phi(\gamma_\ell)$, and if a loop $\gamma_i$ has boundary word $w_i$, $\Phi(\gamma_i)$ has boundary word $\Phi(w_i)$.

\begin{lem}\label{lem:hareacap1}
    Let $H=\pres[A]{R}$ be a group of type $FP_2$ with homological finite presentation $\hpres[A]{R_0}$. Let $\gamma$ be a $1$-cycle in $X(A,R_0)^{(1)}$ and let $c=\sum_ia_i\sigma_i$ be a $2$-chain with $\del c=\gamma$. There exists a constant $C$ dependent only on $A, R_0$, and a $2$-chain $c'$ with $\del c'=\Phi(\gamma)$ and \emph{$\Area(c')\leq C\cdot \Area(c)$}.
\end{lem}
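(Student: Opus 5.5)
We prove the statement cell by cell, using linearity of the boundary map. Write $c=\sum_i a_i\sigma_i$, where each $\sigma_i$ is a $2$-cell $D_{r_i}^{x_i}$ with $r_i\in R_0$ and $x_i\in H$. Its boundary $\del\sigma_i$ is the loop reading $r_i$ from $x_i$. Applying $\Phi$ to this loop, in the sense just defined, gives the loop reading $\Phi(r_i)$ from $\phi(x_i)$. Since $\phi$ is an automorphism, $\Phi(r_i)=_H 1$, so this is a $1$-cycle in $X(A,R_0)$. Because $H_1(X(A,R_0))=0$, it bounds some $2$-chain.

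Since $R_0$ is finite, for each $r\in R_0$ we fix once and for all a $2$-chain $c_r$ filling the loop reading $\Phi(r)$ from the identity vertex. Set $C=\max_{r\in R_0}\Area(c_r)$, which depends only on $A$, $R_0$ and the chosen lift $\Phi$. Using the free cellular left $H$-action on $X(A,R_0)$, the translate $\phi(x_i)\cdot c_{r_i}$ is a $2$-chain whose boundary is $\Phi(\del\sigma_i)$, and its area is also at most $C$.

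Define
$$c'=\sum_i a_i\,\phi(x_i)\cdot c_{r_i}.$$
Then $\Area(c')\le\sum_i|a_i|\,C=C\cdot\Area(c)$. For the boundary we get $\del c'=\sum_i a_i\Phi(\del\sigma_i)$.

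The one real step is showing that $\Phi$ extends to a chain map on $1$-chains, that is, $\sum_i a_i\Phi(\del\sigma_i)=\Phi(\sum_i a_i\del\sigma_i)=\Phi(\gamma)$. To do this, define $\Phi$ on $1$-chains edge by edge. An edge $e$ labelled $a$ from $x$ to $xa$ is sent to the $1$-chain carried by the path reading $\Phi(a)$ from $\phi(x)$. An inverse letter corresponds to traversing the same edge backwards, so it is sent to the same path with the opposite sign; this is consistent because $\Phi(a^{-1})$ represents $\phi(a)^{-1}$.

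We then check two things.
\begin{itemize}
\item The path reading $\Phi(w)$ from $\phi(x)$ is the concatenation of the images of the individual letters of $w$. This holds because $\Phi$ is a monoid homomorphism and $\phi(xu)=\phi(x)\phi(u)$, so the starting points line up.
\item The image of a loop as a $1$-chain equals the signed sum of the images of its edges.
\end{itemize}
Together these show that $\Phi$, so extended, is a homomorphism of $1$-chains agreeing with the definition on loops. Linearity then gives $\del c'=\Phi(\gamma)$.

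The main subtlety is a well-definedness point. If $\Phi(a^{-1})$ is chosen to be the formal inverse of $\Phi(a)$, everything is immediate. If not, we first replace $\Phi$ on inverse letters by formal inverses. This changes $\Phi(\gamma)$ only by $1$-cycles of the form $\Phi(a^{-1})\Phi(a)$, each of bounded filling area. Since the number of edges of $\gamma$ is at most $|R_0|\max_r|r|\cdot\Area(c)$, the cost of this change is absorbed into the constant $C$.
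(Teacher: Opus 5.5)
Your argument is correct and is essentially the paper's proof: fix one filling of each $\Phi(r)$, $r\in R_0$, place a copy of it on every $2$-cell of $c$ with the same coefficient, and take $C$ to be the largest area among these fixed fillings. Your explicit translates $\phi(x_i)\cdot c_{r_i}$, together with the linearity of the edge-wise map $\Phi$ on $1$-chains, just make rigorous the paper's ``subdivide and relabel $c^{(1)}$'' description.

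Your final paragraph is not needed. The paper defines $\Phi(\gamma)$ edge by edge, using only the words $\Phi(a)$ with $a\in A$. So if you choose $c_r$ to fill the edge-wise image of $\del D^1_r$ (a $1$-cycle, since this map commutes with $\del$), you get $\del c'=\Phi(\gamma)$ with no correction term.
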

\begin{proof}
    Let $c^{(1)}$ denote the $1$-skeleton of the subcomplex of $X(A,R_0)$ consisting of the edges bounding each $2$-cell in $c$. Subdivide and relabel each edge in $c^{(1)}$, so that the edge previously labelled by a generator $a_i$ is now labelled by a word $\Phi(a_i)$. The outside boundary of this complex is the image $\Phi(\gamma)$, the loops which once bounded $2$-cells are labelled by words $\Phi(r)$ for $r\in R_0$. Each $\Phi(r)$ is a relation in $H$, so can be filled by a $2$-chain. Fix a $2$-chain $c_{\Phi(r)}$ for each $r\in R_0$, set 
        $$C=\max\left\{\Area(c_{\Phi(r)})\,\big|\,r\in R_0\right\},$$
    then by filling each of these loops with boundary labelled $\Phi(r)$ by the $2$-chain $c_{\Phi(r)}$, we obtain a $2$-chain filling $c'$ for $\Phi(w)$ with
        $$\Area(c')\leq C\cdot \Area(c).$$
\end{proof}

\begin{lem}\label{lem:hareacap2}
    Let $H=\pres[A]{R}$ be a group of type $FP_2$ with homological finite presentation $\hpres[A]{R_0}$. Let $\gamma$ be a $1$-cycle in $X(A,R_0)^{(1)}$ and let $c'=\sum_ia_i\sigma_i$ be a $2$-chain with $\del c'=\Phi(\gamma)$. There exist constants $C',C''$ dependent only on $A, R_0$, and a $2$-chain $c''$ with $\del c''=\gamma$ and \emph{$\Area(c'')\leq C'\cdot \Area(c')+C''\cdot |\gamma |$}.
\end{lem}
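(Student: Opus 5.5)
The plan is to follow Gersten and Short's Lemma 3.2 (our \cref{lem:areacap2}): first apply $\Psi$ to the given filling of $\Phi(\gamma)$, then correct the mismatch between $\Psi(\Phi(\gamma))$ and $\gamma$ by a thin strip of $2$-chains along $\gamma$ whose area is linear in $|\gamma|$.

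\textbf{Step 1: apply $\Psi$ to $c'$.} Since $\phi\inv$ is also an automorphism of $H$, we may apply \cref{lem:hareacap1} with $\Psi$ in place of $\Phi$. This gives a constant $C'$, depending only on $A$, $R_0$ and the chosen words $\Psi(a)$, and a $2$-chain $d$ with $\del d=\Psi(\Phi(\gamma))$ and $\Area(d)\le C'\Area(c')$. The argument is exactly that of \cref{lem:hareacap1}: subdivide and relabel the edges of the cells of $c'$ by the words $\Psi(a)$. Each relator $r\in R_0$ then becomes a loop labelled $\Psi(r)$, which we fill with a fixed $2$-chain $c_{\Psi(r)}$, and we take $C'=\max_{r\in R_0}\Area(c_{\Psi(r)})$.

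\textbf{Step 2: relate $\Psi(\Phi(\gamma))$ to $\gamma$.} Here some care with basepoints is needed. Under the convention of \cref{subsec:newfill}, a vertex $x$ of $\gamma$ is sent to $\phi(x)$, and then $\Psi\Phi$ sends it back to $\phi\inv\phi(x)=x$. So $\Psi(\Phi(\gamma))$ and $\gamma$ pass through the same vertices. Each edge of $\gamma$ labelled $a$ from $x$ to $xa$ is replaced by the path from $x$ labelled $\Psi(\Phi(a))$, which ends at $x\cdot\Psi\Phi(a)=xa$. For each $a\in\ol[A]$ the word $a\inv\Psi(\Phi(a))$ represents $1$ in $H$. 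Since $H_1(X(A,R_0))=0$, the loop reading this word from $1$ bounds a $2$-chain $e_a$, and we fix one for each of the finitely many $a$. Let $C''=\max_a\Area(e_a)$. Translating $e_a$ to the initial vertex $x$ of each edge of $\gamma$ (using the $H$-action) and summing over the edges of a minimal representation of $\gamma$ gives a $2$-chain $e$ with $\del e=\Psi(\Phi(\gamma))-\gamma$ and $\Area(e)\le C''|\gamma|$.

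\textbf{Step 3: conclude.} Set $c''=d-e$. Then
$$\del c''=\Psi(\Phi(\gamma))-\bigl(\Psi(\Phi(\gamma))-\gamma\bigr)=\gamma,$$
and
$$\Area(c'')\le\Area(d)+\Area(e)\le C'\Area(c')+C''|\gamma|.$$

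The main obstacle I expect is bookkeeping in Step 2 rather than any conceptual difficulty. One must check that the chains $e_a$ assemble compatibly, in particular that an edge traversed in reverse uses $e_{a\inv}$, or equivalently $-e_a$ suitably translated. One must also make sure that $\Phi(\gamma)$ is defined consistently with how $c'$ relates to it, since the paper's definition sends vertices to $\Phi(x)$. If that convention causes a translation issue, I would instead work in a fixed $\phi$-equivariant identification of the vertex set and note that $H$-translation preserves areas.
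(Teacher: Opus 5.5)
Your proposal is correct and is essentially the paper's own proof. You apply $\Psi$ cell by cell to $c'$ to obtain a filling of $\Psi(\Phi(\gamma))$ of area at most $C'\Area(c')$, then attach one fixed filling of $a\inv\Psi(\Phi(a))$ along each edge of $\gamma$, which adds at most $C''|\gamma|$.

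Two small remarks. Taking $C'$ as a maximum over fillings of $\Psi(r)$, $r\in R_0$, is the right choice, since the relabelled cells of $c'$ carry $\Psi(r)$; the paper's text writes $\Psi\circ\Phi(r)$ there. Also, the translate of $e_a$ that matches the edge from $x$ to $xa$ is the one by $xa$ rather than $x$, or by $x$ if you read the cyclic conjugate $\Psi(\Phi(a))a\inv$ instead.
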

\begin{proof}
    Analogously to the proof of \cref{lem:hareacap1}, we obtain a filling $c_{\Psi\circ\Phi(\gamma)}$ for $\Psi\circ\Phi(\gamma)$ with area at most $C'\cdot\Area (c')$, where we fix fillings of the relations $\Psi\circ\Phi(r)$ and set
        $$C'=\max\left\{\Area(c_{\Psi\circ\Phi(r)})\,|\,r\in R_0\right\}.$$
    For each generator $a\in A$, fix a $2$-chain filling $c_a$ for the relation $a\inv\left(\Psi\circ\Phi\right)(a)$. Let
        $$C''=\max\left\{\Area(c_a)\,|\,a\in A\right\}.$$
    Attaching such $2$-chains to the boundary of $c_{\Psi\circ\Phi(\gamma)}$ - there will be one for each edge in the boundary of $\gamma$ - yields a $2$-chain filling $c''$ for $\gamma$ with 
        $$\Area(c'')\leq C'\cdot \Area(c')+C''\cdot |\gamma|.$$
\end{proof}
\begin{rem}
    As the above lemmas hold for any filling $c$, we must then have both
    \begin{align*}
        \HArea(\Phi(\gamma))&\leq C\cdot \HArea(\gamma),\\
        \HArea(\gamma)&\leq C'\cdot \HArea(\Phi(\gamma))+C''\cdot |\gamma|.
    \end{align*}
\end{rem}
\begin{rem}\label{cor:sdareacap}
    Lemmas \ref{lem:hareacap1}, \ref{lem:hareacap2} above can be applied to surface diagram fillings as follows. For a $1$-cycle $\gamma$ in $X(A,R_0)^{(1)}$, there exists a surface diagram filling $(S,\pi)$ for $\gamma$, then $\pi(S)$ is exactly some $2$-chain $c$ with $\del c=\gamma$, which can be exchanged using \cref{lem:hareacap1} for a $2$-chain $c'$ with $\del c'=\Phi(\gamma)$, which has a corresponding surface diagram filling $(S',\pi')$ with $\Area(c')=\Area(S')$ by \cref{lem:sdexists} and \cref{rem:samearea}. Thus the area of the new surface diagram $S'$ is bounded in terms of the area of $S$ as in \cref{lem:hareacap1}. The process for \cref{lem:hareacap2} is similar.
\end{rem}

\subsection{\texorpdfstring{$t$}{t}-rings on Surface Diagrams}\label{sec:tring}
As for Gersten and Short's disc diagrams, one observes that any instance of an edge labelled by some $t_i$ must be contained in the interior of a `$t$-ring' structure.

When $K$ is finitely presented, a loop always admits a filling by a disc diagram, and one can always `push down' a $t$-ring by exchanging a disc diagram filling of a word $w$ for a disc diagram filling of $\Phi(w)$, or vice versa. When the filling comes from a surface diagram $(S,\pi)$, these $t$-rings still exist, but there is a possibility that a $t$-ring may not be nullhomotopic in $S$. Consider the case pictured in \cref{fig:badtring}, where a $t$-ring wraps around a handle of $S$. A priori, in this case we cannot apply \cref{lem:hareacap1} or \cref{lem:hareacap2} to push down our loop, since there is no surface diagram filling for either $w$ or $\Phi(w)$.

\begin{figure}[!ht]
    \centering
    \begin{subfigure}{0.4\textwidth}
    \centering
    \resizebox{\textwidth}{!}{
    \begin{tikzpicture}[very thick,decoration={
    markings,
    mark=at position 0.5 with {\arrow{>}}},scale=2.25
    ] 
        \filldraw[fill=black!3!white, postaction=decorate]
            (1,2) ..
            controls ++(90:-0.8)
            and ++ (90:-0.8) ..
            (5,2);
        \filldraw[fill=black!5!white]
            (5,2) ..
            controls ++(90:2)
            and ++ (90:2) ..
            (1,2);
        \filldraw[fill=black!3!white,dashed, postaction=decorate]
            (5,1.98) ..
            controls ++(90:0.5)
            and ++ (90:0.5) ..
            (1,1.98);
        \fill[fill=black!5!white]
            (1.75,3.1) ..
            controls ++(90:2)
            and ++(90:2) ..
            (4.25,3.1)..
            controls ++(0:0)
            and ++(0:0) ..
            (3.75,3.3) ..
            controls ++(90:1.1)
            and ++(90:1.1) ..
            (2.25,3.3);
        \draw
            (2.4,3) ..
            controls ++(0:-0.1)
            and ++(90:-0.1) ..
            (2.25,3.3) ..
            controls ++(90:1.1)
            and ++(90:1.1) ..
            node [pos=0.1] (a) {}
            node [pos=0.3] (b) {}
            (3.75,3.3)..
            controls ++(90:-0.1)
            and ++(0:0.1) ..
            (3.6,3);
        \draw
            (1.6,2.8) ..
            controls ++(0:0.1)
            and ++(90:-0.1) ..
            (1.75,3.1) ..
            controls ++(90:2)
            and ++(90:2) ..
            node [pos=0.1] (a') {}
            node [pos=0.25] (b') {}
            (4.25,3.1)..
            controls ++(90:-0.1)
            and ++(0:-0.1) ..
            (4.4,2.8);
        \draw[postaction=decorate]
            (a'.center) ..
            controls ++(90:-0.1)
            and ++(90:-0.1) ..
            node [pos=0.6] (c) {} (a.center)
            node [pos=0.5, below] {$\Phi(w)$};
        \draw[dashed,postaction=decorate]
            (a.center) ..
            controls ++(90:0.1)
            and ++(90:0.1) ..
            node [pos=0.6] (d) {}
            (a'.center);
        \draw[postaction=decorate]
            (b'.center) ..
            controls ++(210:0.1)
            and ++(90:-0.1) ..
            node [pos=0.6] (c') {} (b.center);
        \draw[dashed,postaction=decorate]
            (b.center) ..
            controls ++(90:0.1)
            and ++(90:0.1) ..
            node [pos=0.6] (d') {}
            node [pos=0.5, above right] {$w$}
            (b'.center);
        \draw[blue,postaction=decorate] 
            (c.center) ..
            controls ++(90:0.2)
            and ++(60:-0.2) ..
            node [above, pos=0.5] {$t$}
            (c'.center);
        \draw[blue, dashed,postaction=decorate] 
            (d.center) ..
            controls ++(90:0.2)
            and ++(60:-0.2) ..
            (d'.center);
    \end{tikzpicture}
    }
    \caption{A worrying `$t$-ring' placement on $S$.}
    \label{fig:badtring}
    \end{subfigure}%
    \begin{subfigure}{0.4\textwidth}
    \centering
    \resizebox{\textwidth}{!}{
    \begin{tikzpicture}[very thick,decoration={
    markings,
    mark=at position 0.5 with {\arrow{>}}},scale=2.25
    ] 
        \filldraw[fill=black!3!white, postaction=decorate]
            (1,2) ..
            controls ++(90:-0.8)
            and ++ (90:-0.8) ..
            (5,2);
        \filldraw[fill=black!5!white]
            (5,2) ..
            controls ++(90:2)
            and ++ (90:2) ..
            (1,2);
        \filldraw[fill=black!3!white,dashed, postaction=decorate]
            (5,1.98) ..
            controls ++(90:0.5)
            and ++ (90:0.5) ..
            (1,1.98);
        \fill[fill=black!5!white]
            (1.75,3.1) ..
            controls ++(90:2)
            and ++(90:2) ..
            (4.25,3.1)..
            controls ++(0:0)
            and ++(0:0) ..
            (3.75,3.3) ..
            controls ++(90:1.1)
            and ++(90:1.1) ..
            (2.25,3.3);
        \draw
            (2.4,3) ..
            controls ++(0:-0.1)
            and ++(90:-0.1) ..
            (2.25,3.3) ..
            controls ++(90:1.1)
            and ++(90:1.1) ..
            node [pos=0.1] (a) {}
            node [pos=0.3] (b) {}
            node [pos=0.9] (x) {}
            node [pos=0.7] (y) {}
            (3.75,3.3)..
            controls ++(90:-0.1)
            and ++(0:0.1) ..
            (3.6,3);
        \draw
            (1.6,2.8) ..
            controls ++(0:0.1)
            and ++(90:-0.1) ..
            (1.75,3.1) ..
            controls ++(90:2)
            and ++(90:2) ..
            node [pos=0.1] (a') {}
            node [pos=0.25] (b') {}
            node [pos=0.9] (x') {}
            node [pos=0.75] (y') {}
            (4.25,3.1)..
            controls ++(90:-0.1)
            and ++(0:-0.1) ..
            (4.4,2.8);
        \draw[postaction=decorate]
            (a'.center) ..
            controls ++(90:-0.1)
            and ++(90:-0.1) ..
            node [pos=0.6] (c) {} (a.center)
            node [pos=0.5, below] {$\Phi(w)$};
        \draw[dashed,postaction=decorate]
            (a.center) ..
            controls ++(90:0.1)
            and ++(90:0.1) ..
            node [pos=0.6] (d) {}
            (a'.center);
        \draw[postaction=decorate]
            (b'.center) ..
            controls ++(210:0.1)
            and ++(90:-0.1) ..
            node [pos=0.6] (c') {} (b.center);
        \draw[dashed,postaction=decorate]
            (b.center) ..
            controls ++(90:0.1)
            and ++(90:0.1) ..
            node [pos=0.6] (d') {}
            node [pos=0.5, above right] {$w$}
            (b'.center);
        \draw[blue,postaction=decorate] 
            (c.center) ..
            controls ++(90:0.2)
            and ++(60:-0.2) ..
            node [above, pos=0.5] {$t$}
            (c'.center);
        \draw[blue, dashed,postaction=decorate] 
            (d.center) ..
            controls ++(90:0.2)
            and ++(60:-0.2) ..
            (d'.center);

        \draw[postaction=decorate]
            (x.center) ..
            controls ++(90:-0.1)
            and ++(90:-0.1) ..
            node [pos=0.3] (z) {} (x'.center)
            node [pos=0.6, below] {$\Phi(w')$};
        \draw[dashed,postaction=decorate]
            (x'.center) ..
            controls ++(90:0.1)
            and ++(90:0.1) ..
            node [pos=0.3] (w) {}
            (x.center);
        \draw[postaction=decorate]
            (y.center) ..
            controls ++(60:-0.1)
            and ++(90:-0.1) ..
            node [pos=0.4] (z') {} (y'.center);
        \draw[dashed,postaction=decorate]
            (y'.center) ..
            controls ++(90:0.1)
            and ++(90:0.1) ..
            node [pos=0.3] (w') {}
            node [pos=0.5, above left] {$w'$}
            (y.center);
        \draw[blue,postaction=decorate] 
            (z.center) ..
            controls ++(90:0.2)
            and ++(120:-0.2) ..
            node [above, pos=0.5] {$t$}
            (z'.center);
        \draw[blue, dashed,postaction=decorate] 
            (w.center) ..
            controls ++(90:0.2)
            and ++(120:-0.2) ..
            (w'.center);
    \end{tikzpicture}
    }
    \caption{The corresponding `$t$-cycle' on $S$.}
    \label{fig:goodtring}
    \end{subfigure}
    \caption{}
\end{figure}

We overcome this by considering `$t$-cycles' in place of $t$-rings. Observe that the Cayley graph for $H$ with respect to the generating set of \cref{lem:hfp2} consists of a copy of $K$ for each element of $F_n$, corresponding to the different cosets $Kw$ in $H$. These coset subgraphs are joined to one another corresponding to the Cayley graph of $F_n$, that is, the only edges between $Kw$ and $Kwt_i$ are labelled $t_i$. As such, in the case of \cref{fig:badtring}, the only paths from $Kwt$ to $Kw$ must pass back along some $t$-edge, so there must be another $t$-ring on the other side of the handle, as pictured in \cref{fig:goodtring}.

Generalising this picture, we define a homological analogue of the $t$-ring.

\begin{defn}
    Let $(S,\pi)$ be a filling for the $1$-cycle $\gamma$ in $X(A,R_0)$. Consider $\pi\inv(Kw)$, the part of $S$ that maps into the coset $Kw$, and observe that its boundary is a collection of circles, each corresponding to some $t_i$-ring. For some fixed stable letter $t_i$, and each reduced word $w\in F(A)$ with $wt_i$ also reduced, we call the collection of $2$-cells whose boundary has label some $t_ia_jt_i\Phi(a_j)\inv$ and which intersects the subsurfaces $\pi\inv(Kw)$, $\pi\inv(Kwt_i)$, the \emph{$t_i$-cycle} joining $Kw$ to $Kwt_i$, or simply a $t$-cycle when the particular choice of stable letter $t_i$ and reduced word $w$ is irrelevant. 
    
    When $wt_i^{\pm 1}$ is a reduced word, we call the collection of loops in $S$ corresponding to the boundary in $Kw$ the \textit{inner boundary} $\del_{\In}$ and that for $Kwt_i^{\pm 1}$ the \textit{outer boundary} $\del_{\out}$.
\end{defn}
\begin{rem}
    Following \cref{cor:sdareacap}, observe that Lemmas \ref{lem:hareacap1}, \ref{lem:hareacap2} allow us to exchange a $K$-filling for the outer boundary of a $t$-cycle, for a $K$-filling for the inner boundary, with a linear bound on the area growth.
\end{rem}

\begin{lem}[$t$-cycles are unique]\label{gettcycle}
    Let $c$ be a finite $2$-chain in 
    \begin{center}
        $X=X(A\cup \left\{ t_1,\ldots,t_n\right\}, R_0\cup\left\{t_i\inv a_j t_i\Phi_i(a_j)\inv\right\})$
    \end{center} 
    whose boundary is a $1$-cycle $\gamma$ with no $t$-edges. Then every edge labelled $t_i$ in $c$ is contained in a unique $t_i$-cycle whose boundary consists of two $1$-cycles, each contained in one of the subcomplexes $Kw$, $Kwt_i$ for some $w\in F(t_1,\ldots,t_n)$.
\end{lem}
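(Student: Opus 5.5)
We will pass from the $2$-chain $c$ to a surface diagram and read off the $t_i$-cycles from the coset structure of $X$. By \cref{lem:sdexists} and \cref{rem:samearea} there is a surface diagram filling $(S,\pi)$ for $\gamma$ with $\pi(S)=c$. Hence it suffices to argue on $S$, where every interior edge lies on exactly two $2$-cells and $\del S$ carries no $t$-edges. The only relators containing the letter $t_i$ are $t_i\inv a_j t_i\Phi_i(a_j)\inv$, and each contains exactly two occurrences of $t_i$ (one of each sign), both in the boundary of a single $2$-cell. So, as in the classical corridor argument of \cite{gersten1996}, each $t_i$-edge $e$ of $S$ lies on exactly two $2$-cells, each of which has exactly one further $t_i$-edge. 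Following these adjacencies gives a sequence of $t_i$-cells that can never reach $\del S$, since $\del S$ has no $t$-edges. By finiteness the sequence must close up into an annulus (a $t_i$-ring), and this ring is unique because every step is forced.

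Next I would locate the two sides of each ring. Every vertex of a relator cell $t_i\inv a_j t_i\Phi_i(a_j)\inv$ based at $x$ lies in $Kx$ or $Kxt_i$. Along the ring the side carrying the $a_j$-edges maps into a single coset $Kw$ and the side carrying the $\Phi_i(a_j)$-paths maps into $Kwt_i$, because consecutive cells share a $t_i$-edge and $A$-edges do not change cosets. Thus the two boundary circles of each ring are loops in $Kw$ and $Kwt_i$ respectively. Note that $w$ ranges over the reduced words of $F(t_1,\ldots,t_n)$ (the statement's "$F(A)$" in the definition should be read this way).

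The $t_i$-cycle containing $e$ is then the union of all $t_i$-rings whose cells meet both $\pi\inv(Kw)$ and $\pi\inv(Kwt_i)$, for the unique $w$ determined by $e$. Uniqueness is immediate: the pair of cosets is determined by the endpoints of $\pi(e)$, because in the tree-of-spaces structure of \cref{lem:hfp2} a $t_i$-edge joins exactly one ordered pair $Kw$, $Kwt_i$ with $wt_i$ reduced, after possibly swapping the roles of $w$ and $wt_i$ when $w$ ends in $t_i\inv$.

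The main obstacle is showing that the inner and outer boundaries are $1$-cycles, and not merely loops. I would first check that each ring boundary is a closed loop, hence a cycle, and that a finite sum of cycles is a cycle; it then remains to confirm that these sums are the right objects. The point to verify is that the inner boundary agrees, as a chain, with $\del$ of the $2$-chain $c|_{Kw\text{-side}}$, meaning the cells of $c$ lying in the subcomplexes on one side of the edge of the Bass--Serre tree. I would argue this by cutting $S$ along all $t_i$-rings of this edge. The tree structure of the cosets separates $S$ into the part mapping to the $Kw$-side and the part mapping to the $Kwt_i$-side. The boundary of the $Kw$-side part consists of some components of $\del S$, which lie in $\gamma$, together with the inner ring boundaries. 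So the inner ring boundaries form a cycle homologous within the $Kw$-side to part of $\gamma$, and each such circle lies in $Kw$. The symmetric argument handles the outer boundary.
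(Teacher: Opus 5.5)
Your proposal is correct and follows the same route as the paper. Like the paper, you pass to a surface diagram $(S,\pi)$ with $\pi(S)=c$ and use the fact that the endpoints of a $t_i$-edge determine a unique pair of adjacent cosets $Kw$, $Kwt_i$, and hence the unique $t_i$-cycle containing it. Your corridor argument and your check that the inner and outer boundaries are sums of closed loops lying in $Kw$ and $Kwt_i$ fill in details that the paper's short proof only asserts. The cutting argument in your last paragraph goes beyond what the statement requires, but it is what the later proof of \cref{thm:homger} implicitly relies on.
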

\begin{proof}
    Consider a surface diagram $(S,\pi)$ for $c$. Consider an arbitrary $t$-edge with label $t_i$ in $S$. The initial vertex of this edge lies in $\pi\inv(Kw)$ for some reduced word $w\in F(t_1,\ldots,t_n)$ and the terminal vertex lies in $\pi\inv(Kwt_i)$. So this $t_i$-edge is contained in the $t_i$-cycle whose inner and outer boundaries lie in $\pi\inv(Kw)$ and $\pi\inv(Kwt_i)$. In the Cayley graph, an edge labelled $t_i$ joins two unique cosets $Kw,Kwt_i$, so this $t_i$-edge cannot be contained in any other $t$-cycle, since by definition there is a unique $t$-cycle joining any two adjacent cosets of $K$ in $H$.
\end{proof}

\subsection{The Homological Isoperimetric Inequality}
Recall the hypotheses of \cref{thm:homger}.

\textbf{Theorem \ref{thm:homger}.}\emph{
    Let $H$ be an extension of a group $K$ of type $FP_2$ by a finitely generated free group $F_n$, so that we have the short exact sequence
    \begin{center}
        $1\ra K\ra H\ra F_n\ra 1$.
    \end{center}
    Then if $(f,g)$ is a homological area-radius pair for $H$ with $f(n)\geq n$ for all $n$, there is a constant $C >1$ such that $\FA_K\preccurlyeq C^gf$.
}
\begin{proof}
    Let $K\cong\langle A\,|\,R\rangle$ have a homological finite presentation $\hpres[A]{R_0}$. Let $X$ denote the homological Cayley complex for $H$ with respect to the presentation given in \cref{lem:hfp2}. Let $\gamma$ be a $1$-cycle in $\Gamma(K,A)$. We embed $\gamma$ into $X$ by identifying $\Gamma(K,A)$ with the subcomplex corresponding to $K$ in the $1$-skeleton $X^{(1)}=\Gamma(H,A\cup\left\{t_1,\ldots,t_n\right\})$.

    Let $c=\sum_ia_i\sigma_i$ be a $2$-chain with $\del c=\gamma$ in $X$ with area no more than $f(|\gamma|)$. Let $(S,\pi)$ be a surface diagram filling for $c$. In $S$, the $2$-cells corresponding to relations $t\inv a_j t\Phi(a_j)\inv$ form $t$-cycles. Each $t$-cycle is associated to one of the letters $t_i$ and connects regions $\pi\inv(Kw)$ and $\pi\inv(Kwt_i)$ of $S$. Let $W_c$ be the set of all reduced words $w\in F(t_1,\ldots,t_n)$ such that the filling $c$ has non-empty intersection with the subcomplex of $X$ corresponding to the coset $Kw$.

    Let $w$ be a word of maximal length $k$ in $W_c$, say the last letter of $w$ is $t_i^{\pm 1}$, so $w=w' t_i^{\pm 1}$, where $w'\in F(t_1,\ldots,t_n)$ has length $k-1$. We can restrict $(S,\pi)$ to a surface diagram filling $\left(S_{\out}\coloneqq\pi\inv(Kw),\pi|_{S_{\out}}\right)$ for the outer boundary $\del_{\out}$ of the $t_i$-cycle connecting $Kw'$ to $Kw$. $S_{\out}$ has no interior $t$-edges, so is a $K$-filling, since if it did, this would join $Kw$ to some $Kwt_j^{\pm 1}$, where $wt_j^{\pm 1}$ is a reduced word strictly longer than $w$. Here we separate cases depending whether the final letter of $w$ is a stable letter or its inverse. 
    
    If $w$ ends in $t_i$, the labels on the outer boundary $\del_{\out}$ can be obtained by applying $\Phi_i$ to the inner boundary $\del_{\In}$ of the $t_i$-cycle, we write $\del_{\out}=\Phi_i(\del_{\In})$. Applying \cref{lem:hareacap2} as described in \cref{cor:sdareacap}, we can exchange the $K$-filling $S_{\out}$ of $\del_{\out}$ and the $t_i$-cycle itself for a $K$-filling $S_{in}$ of $\del_{\In}$ whose area $\Area (S_{\In})\leq C'\cdot\Area (S_{\out})+C''\cdot|\del_{\out}|$, where $C'$ and $C''$ are constants as in \cref{lem:hareacap2}.

    If $w$ ends in $t_i\inv$, we have that $\del_{\In}=\Phi_i(\del_{\out})$, and applying \cref{lem:hareacap1} as described in \cref{cor:sdareacap} tells us that we can exchange the $K$-filling $S_{\out}$ for $\del_{\out}$ and the $t_i$-cycle itself with a $K$-filling $S_{\In}$ for $\del_{\In}$ whose area $\Area (S_{\In})\leq C\cdot\Area (S_{\out})$, where $C$ is a constant as in \cref{lem:hareacap1}.
    
    Repeating this process for all $w\in W_C$ of length $k$, we have removed all words of length $k$ in $W_c$ from the filling by eliminating their corresponding $t_i$-cycles, that is, we have obtained a new surface diagram filling for $\gamma$ that intersects only the cosets corresponding to words in $W_c$ of length strictly less than $k$.
    
    Since $S$ is a surface, any edge in $S$ appears in the boundary of at most 2 $2$-cells in $S$, so can occur in no more than $2$ $t$-cycles. It follows that twice the number of edges in $S$ is an upper bound on the length $|\del_{\out}|$. Let $\rho$ be the length of the longest relation in $R_0\cup\left\{t_i\inv a_j t_i\Phi_i(a_j)\inv\right\}$, then $|\del_{\out}|\leq |\gamma|+2\rho\Area (S)$. Applying this process simultaneously to all words of maximal length at worst multiplies the area of $S$ by a constant $M$ and adds at most $Mf(|\gamma|)$, where it suffices to take $M= \max\left\{C,C',C''(2\rho+1),1\right\}$. The maximal length $k$ of a word $w\in W_c$ is bounded above by the radius $g(|\gamma|)$, hence one can apply the above procedure $k\leq g(|\gamma|)$ times to obtain a surface diagram filling $(S',\pi')$ for $\gamma$ whose image is contained in $K$ and whose area is no more than
    $$\mu^{g(|\gamma|)}\left(f\left(|\gamma|\right)\right)
        \leq M^{g(|\gamma|)+1}f(|\gamma|)\leq (M^2)^{g(|\gamma|)}f(|\gamma|)$$
    where $\mu(x)=Mx+Mf(|\gamma|)$ is the upper bound on the area increase given by each iteration of Lemmas \ref{lem:hareacap1}, \ref{lem:hareacap2}. Thus it suffices to take the constant $C\geq M^2$ in the statement of the theorem.
\end{proof}

    \subsection{Hyperbolic Groups}
A key motivation for Gersten and Short's work was to better understand isoperimetric inequalities for subgroups of hyperbolic groups. For our purposes, hyperbolic groups are characterised by satisfying a linear isoperimetric inequality, that is, a finitely presented group $H$ is hyperbolic if and only if $\delta_H(n)\simeq n$.

Gersten and Short's general result is applied in the case $H$ hyperbolic in Theorem A of \cite{gersten2002}:
\textbf{\cref{thm:gerhyp}}\emph{
    Given an extension
    \begin{center}
        $1\ra K\ra H\ra F_n\ra 1$
    \end{center}
    where $F_n$ is free and $K$ is finitely presented, then if $H$ is hyperbolic, $K$ satisfies a polynomial isoperimetric inequality.
}

For hyperbolic groups, Gersten and Short give, in Lemma 2.2 of \cite{gersten2002}, an area-radius pair.
\begin{lem}\label{lem:hypbd}\emph{(\cite{gersten2002})}
    Let $\mathcal{P}=\pres[A]{R}$ be a finite presentation of a hyperbolic group $H$, then there are constants $B,C>0$ such that for any nullhomotopic word $w\in\ol[A]^*$ with $\ell(w)\geq 1$, there is a Van Kampen diagram over $\mathcal{P}$ with area at most $B\ell(w)(\log_2(\ell(w))+1)$ and radius at most $C(\log_2(\ell(w))+1)$.
\end{lem}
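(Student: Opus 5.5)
The plan is to prove the bound first for one convenient finite presentation of $H$. We then transfer it to an arbitrary finite presentation $\mathcal{P}$ using the invariance of area-radius pairs up to equivalence (Proposition 2.1 of \cite{gersten2002}). Equivalence preserves bounds of the form $Bn(\log_2 n+1)$ and $C(\log_2 n+1)$, after adjusting the constants.

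\textbf{Step 1: the presentation.} Fix $\delta$ such that geodesic triangles in $\Gamma(H,A)$ are $\delta$-thin. Let $R_\delta$ be the finite set of all words of length at most $4\delta+4$ that are trivial in $H$. Then $\pres[A]{R\cup R_\delta}$ is a finite presentation of $H$. Over it, every closed path of length at most $4\delta+4$ bounds a single $2$-cell.

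\textbf{Step 2: filling a single geodesic triangle.} Take a geodesic triangle with sides $\alpha,\beta,\eta$. By $\delta$-thinness, we can join corresponding points on pairs of sides by rungs of length at most $\delta$, the points being taken at unit steps along the sides. This gives a ladder decomposition of the triangle. Each cell of the ladder is a closed path of length at most $2\delta+2$, so it is a single relator in $R_\delta$, plus a bounded tripod region near the centre. The resulting diagram has area at most $B_0(|\alpha|+|\beta|+|\eta|+1)$. Every vertex of it is within distance $\delta+1$ of the boundary of the triangle.

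\textbf{Step 3: dyadic subdivision of a loop.} Let $w$ have length $n$, with vertices $v_0,\dots,v_n=v_0$ along its path. Pad $n$ up to a power of $2$ if necessary; this changes $n$ by at most a factor of $2$. Choose geodesic chords $[v_i,v_j]$ for all dyadic pairs $i=m2^{k-j'}$, $j=(m+1)2^{k-j'}$. The top chord is the degenerate one from $v_0$ to $v_n=v_0$. Each chord at level $j'$ together with its two children chords forms a geodesic triangle. At the bottom level a chord has length $\le 1$ and coincides with an edge of $w$ up to a bounded loop. Filling each triangle as in Step 2 and gluing along the chords gives a Van Kampen diagram for $w$.

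\emph{Area.} A chord subtending an arc of length $\ell$ of $w$ has length at most $\ell$. At each level the arcs are disjoint, so the chord lengths at each level sum to at most $n$. There are $\log_2 n+1$ levels, so the total area is at most $3B_0 n(\log_2 n+1)$ plus a linear term.

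\emph{Radius.} A vertex in a level-$j'$ triangle is within $\delta+1$ of one of that triangle's sides. That side is either a boundary edge of $w$ or a chord lying in a triangle one level closer to the top. Iterating reaches $\del D$ after at most $\log_2 n+1$ steps, so the radius is at most $(\delta+1)(\log_2 n+1)$.

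\textbf{Main obstacle.} The delicate point is Step 2: producing a triangle filling whose area is \emph{linear} in the perimeter and whose points all lie within a \emph{uniform} distance of its sides. In particular one must handle the gluing of ladder rungs near the internal point of the tripod, and rungs that skip vertices because of the $\pm\delta$ slack between corresponding points. I would handle this by taking rungs at every unit step along each side. The ladder cells then have perimeter at most $2\delta+2$, and the central region has perimeter at most $3\delta+3$. Both bounds are within the range covered by $R_\delta$.
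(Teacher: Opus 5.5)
The paper gives no proof of this lemma; it is quoted from Gersten and Short. Your overall route is the standard argument for this area-radius pair: enlarge the relators to all short relations, fill $\delta$-thin geodesic triangles by ladders with linear area, assemble a dyadic tree of triangles over the loop, then transfer back to $\mathcal{P}$. The reduction to one presentation, the ladder filling in Step 2 and the area count are fine. (Minor point: ``padding'' should mean taking division points $v_{\lfloor mn/2^{j}\rfloor}$, or allowing repeated vertices, not altering $w$.)

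\textbf{The gap is the radius estimate.} Your iteration runs \emph{upward}, but the boundary of $D$ is the \emph{bottom} level; the top chord is just the vertex $v_0$. Your dichotomy is also incomplete: a vertex of a level-$j'$ triangle may only be close to one of its child chords at an intermediate level $j'+1<k$. Such a chord is neither an edge of $w$ nor a side of a higher triangle. Finally, once you are at a vertex of a chord, ``within $\delta+1$ of a side of the triangle above'' holds trivially with distance $0$. Thinness of that triangle only puts you near its parent chord \emph{or near the sibling chord on the same level}. So the level need not decrease, and nothing bounds the number of steps by $\log_2 n+1$. The property ``every vertex of a triangle's filling is within $\delta+1$ of its boundary'' is too weak to drive the induction.

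\textbf{The fix.} Record in Step 2 what the ladder actually gives: every vertex on the \emph{parent} side of a triangle is joined, inside that triangle's filling, to a vertex on one of the two \emph{child} sides. The joining path is a rung of length at most $\delta$, plus possibly one edge near the tripod centre. Then argue downward:
\begin{itemize}
\item Any vertex of $D$ lies within $\delta$ of a vertex on some chord of level $\ell$.
\item From a vertex on a chord of level $\ell<k$, pass through the ladder of the level-$\ell$ triangle having that chord as its parent side. This reaches a vertex on a chord of level $\ell+1$ by a path of length at most $\delta+1$.
\item Bottom-level chords have length at most $1$, so their vertices are vertices of $w$.
\end{itemize}
This reaches $\partial D$ after at most $k-\ell\le k=\lceil\log_2 n\rceil$ such steps. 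The radius is then at most $\delta+k(\delta+1)\le(\delta+1)(\log_2 n+2)$, which is of the required form $C(\log_2 n+1)$.
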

Applying this AR pair to \cref{thm:gergen} gives us \cref{thm:gerhyp}. After noting that an AR pair in the sense of \cite{gersten2002} is also a homological AR pair, we can obtain the following consequence of \cref{thm:homger}:
\begin{bigthm}\label{thmB}
    Let $H$ be an extension of a group $K$ of type $FP_2$ by a finitely generated free group $F_n$, so that we have the short exact sequence
    \begin{center}
        $1\ra K\ra H\ra F_n\ra 1$.
    \end{center}
    Then if $H$ is hyperbolic, $FA_K$ is bounded above by a polynomial.
\end{bigthm}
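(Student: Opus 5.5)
The plan is to feed the hyperbolic area-radius pair of \cref{lem:hypbd} into \cref{thm:homger}. First, $H$ is hyperbolic and hence finitely presented; fix a finite presentation $\mathcal{P}=\pres[A]{R}$. By \cref{lem:hypbd} there are constants $B,C_0>0$ such that every nullhomotopic word $w$ with $\ell(w)\ge 1$ has a Van Kampen diagram of area at most $B\ell(w)(\log_2\ell(w)+1)$ and radius at most $C_0(\log_2\ell(w)+1)$.

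To use \cref{thm:homger} we need a \emph{homological} area-radius pair in the sense of \cref{def:homarp}, which concerns arbitrary $1$-cycles rather than single loops. A $1$-cycle $\gamma$ with $|\gamma|\le n$ decomposes into closed loops $\gamma_1,\dots,\gamma_\ell$ with $\sum|\gamma_i|\le n$. Filling each loop by such a Van Kampen diagram and taking the disjoint union gives a surface diagram filling: a disc diagram is a sphere with one open $2$-cell removed, and degenerate (non-disc) Van Kampen diagrams can be thickened or treated as limiting cases. Its area is at most $\sum_i B|\gamma_i|(\log_2|\gamma_i|+1)\le Bn(\log_2 n+1)$, since $x\log x$ is superadditive. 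Its radius is at most $\max_i C_0(\log_2|\gamma_i|+1)\le C_0(\log_2 n+1)$, because each vertex lies within that distance of the boundary of its own component. So $f(n)=\max\{n, Bn(\log_2 n+1)\}$ and $g(n)=C_0(\log_2 n+1)$ form a homological area-radius pair for $\mathcal{P}$ with $f(n)\ge n$. By \cref{prop:harindep}, the property transfers up to equivalence to the homological finite presentation of \cref{lem:hfp2} used in the proof of \cref{thm:homger}. Note that $g\cong$-equivalence only rescales the constant inside the logarithm, and that is all we need.

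Applying \cref{thm:homger} gives a constant $C>1$ with
\[
\FA_K\preccurlyeq C^{g}f,
\]
and
\[
C^{g(n)}=C^{C_0}\cdot n^{C_0\log_2 C}.
\]
Hence $C^{g(n)}f(n)\le D\,n^{C_0\log_2 C+2}$ for a suitable constant $D$, which is a polynomial. Because $\preccurlyeq$ only introduces affine changes of argument and additive linear terms, $\FA_K$ is bounded above by a polynomial.

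The step needing the most care is the passage from Van Kampen diagrams for words to surface diagram fillings for arbitrary $1$-cycles. This covers both the possibly degenerate (non-planar-disc) diagrams and the fact that cycles need not be single loops. If singular disc diagrams cause trouble, I would instead use the remark after \cref{def:homarp}, which states that usual AR pairs are homological AR pairs, and handle multiple loops by the disjoint-union argument above.
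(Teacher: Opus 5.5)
Your proposal is correct and is essentially the paper's argument: regard the Gersten--Short hyperbolic area-radius pair of \cref{lem:hypbd} as a homological area-radius pair and feed it into \cref{thm:homger}, where $C^{C_0(\log_2 n+1)}=C^{C_0}n^{C_0\log_2 C}$ makes $C^gf$ polynomial. You add bookkeeping the paper leaves implicit (splitting $1$-cycles into loops, superadditivity of $n\log n$, transfer to the presentation of \cref{lem:hfp2} via \cref{prop:harindep}); your `thickening' of singular diagrams is vague but unnecessary, since your loops never traverse an edge in both directions, so their van Kampen diagrams have no spurs or bridges and can be cut at cut vertices into genuine discs without changing area or radius.
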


\bibliographystyle{alpha}
\bibliography{refs}

@article{gersten2002,
author={Gersten, Steve
and Short, Hamish},
title={Some Isoperimetric Inequalities for Kernels of Free Extensions},
journal={Geometriae Dedicata},
year={2002},
month={Jul},
day={01},
volume={92},
number={1},
pages={63-72},
issn={1572-9168},
doi={10.1023/A:1019659912872},
url={https://doi.org/10.1023/A:1019659912872}
}

@misc{brady2021,
      title={Homological {Dehn} functions of groups of type {$FP_2$}}, 
      author={Noel Brady and Robert Kropholler and Ignat Soroko},
      year={2021},
      eprint={2012.00730},
      archivePrefix={arXiv},
      primaryClass={math.GR},
      url={https://arxiv.org/abs/2012.00730}, 
}

@Inbook{gromov1987,
author="Gromov, M.",
title="Hyperbolic Groups",
bookTitle="Essays in Group Theory",
year="1987",
publisher="Springer New York",
address="New York, NY",
pages="75--263",
isbn="978-1-4613-9586-7",
doi="10.1007/978-1-4613-9586-7_3",
url="https://doi.org/10.1007/978-1-4613-9586-7_3"
}

@article{gersten1996,
author = {Gersten, S.M.},
journal = {Geometric and functional analysis},
number = {2},
pages = {301-345},
title = {Preservation and Distortion of Area in Finitely Presented Groups.},
url = {http://eudml.org/doc/58228},
volume = {6},
year={1996},
}

@article{gerstendim2,
author = {Gersten, S. M.},
title = {Subgroups of Word Hyperbolic Groups in Dimension 2},
journal = {Journal of the London Mathematical Society},
volume = {54},
number = {2},
pages = {261-283},
doi = {https://doi.org/10.1112/jlms/54.2.261},
url = {https://londmathsoc.onlinelibrary.wiley.com/doi/abs/10.1112/jlms/54.2.261},
eprint = {https://londmathsoc.onlinelibrary.wiley.com/doi/pdf/10.1112/jlms/54.2.261},
year = {1996}
}

@article{brady1999,
author = {Brady, Noel},
title = {Branched Coverings of Cubical Complexes and Subgroups of Hyperbolic Groups},
journal = {Journal of the London Mathematical Society},
volume = {60},
number = {2},
pages = {461-480},
doi = {https://doi.org/10.1112/S0024610799007644},
url = {https://londmathsoc.onlinelibrary.wiley.com/doi/abs/10.1112/S0024610799007644},
eprint = {https://londmathsoc.onlinelibrary.wiley.com/doi/pdf/10.1112/S0024610799007644},
year = {1999}
}

@article{rips1982,
    author = {Rips, E.},
    title = {Subgroups of small Cancellation Groups},
    journal = {Bulletin of the London Mathematical Society},
    volume = {14},
    number = {1},
    pages = {45-47},
    year={1982},
    month = {01},
    issn = {0024-6093},
    doi = {10.1112/blms/14.1.45},
    url = {https://doi.org/10.1112/blms/14.1.45},
    eprint = {https://academic.oup.com/blms/article-pdf/14/1/45/881637/14-1-45.pdf},
}

@article{bowditch1995,
author = {B. H. Bowditch},
title = {{A short proof that a subquadratic isoperimetric inequality implies a linear one.}},
volume = {42},
journal = {Michigan Mathematical Journal},
number = {1},
publisher = {University of Michigan, Department of Mathematics},
pages = {103 -- 107},
year = {1995},
doi = {10.1307/mmj/1029005156},
URL = {https://doi.org/10.1307/mmj/1029005156}
}

@misc{kropholler2021hyperbolicgroupsfinitelypresented,
      title={Hyperbolic groups with almost finitely presented subgroups}, 
      author={Robert Kropholler and Federico Vigolo},
      year={2021},
      eprint={1809.10594},
      archivePrefix={arXiv},
      primaryClass={math.GR},
      url={https://arxiv.org/abs/1809.10594}, 
}

@misc{arenas2022linearisoperimetricfunctionssurfaces,
      title={Linear isoperimetric functions for surfaces in hyperbolic groups}, 
      author={Macarena Arenas and Daniel T. Wise},
      year={2022},
      eprint={2205.10096},
      archivePrefix={arXiv},
      primaryClass={math.GT},
      url={https://arxiv.org/abs/2205.10096}, 
}

@Article{bestvina1997,
author={Bestvina, Mladen
and Brady, Noel},
title={Morse theory and finiteness properties of groups},
journal={Inventiones mathematicae},
year={1997},
month={Aug},
day={01},
volume={129},
number={3},
pages={445-470},
issn={1432-1297},
doi={10.1007/s002220050168},
url={https://doi.org/10.1007/s002220050168}
}

@misc{kochloukova2016homologicalfinitenesspropertiesfibre,
      title={Homological finiteness properties of fibre products}, 
      author={Dessislava H. Kochloukova and Francismar Ferreira Lima},
      year={2016},
      eprint={1611.03759},
      archivePrefix={arXiv},
      primaryClass={math.GR},
      url={https://arxiv.org/abs/1611.03759}, 
}

\end{document}